\numberwithin{equation}{section}
\newtheorem{satz}{Satz}[section]
\newtheorem{theorem}[satz]{Theorem}
\newtheorem{lemma}[satz]{Lemma}
\theoremstyle{definition}
\newtheorem{remark}[satz]{Remark}
\DeclareMathOperator{\E}{{\mathbf E}}
\DeclareMathOperator{\R}{{\mathbb R}}
\DeclareMathOperator{\N}{{\mathbb N}}
\DeclareMathOperator{\PP}{{\mathbb P}}
\DeclareMathOperator{\diag}{diag}
\DeclareMathOperator{\Var}{Var}
\providecommand{\eps}{\varepsilon}
\renewcommand{\phi}{\varphi}
\renewcommand{\theta}{\vartheta}
\renewcommand{\subset}{\subseteq}
\renewcommand{\cdot}{{\scriptstyle \bullet} }
\providecommand{\abs}[1]{\lvert #1 \rvert}
\providecommand{\norm}[1]{\lVert #1 \rVert}
\renewcommand{\le}{\leqslant}
\renewcommand{\ge}{\geqslant}
\newcommand{\e}{\varepsilon }
\title{Model selection in sparse heterogeneous framework}
\author{Laurent Cavalier \footnote{Universit\'e Aix-Marseille, LATP, CMI, 39 rue Joliot-Curie, F-13453 Marseille cedex 13, France, cavalier@cmi.univ-mrs.fr} \and Markus Rei\ss
\footnote{Institut f\"ur Mathematik, Humboldt-Universit\"at zu Berlin, Unter den Linden 6, D-10099 Berlin, Germany, mreiss@math.hu-berlin.de}}
\date{\today}
\begin{document}

\maketitle

\begin{abstract}
We consider a Gaussian sequence space model $X_{\lambda}=f_{\lambda} + \xi_{\lambda},$
where $\xi $ has a diagonal covariance matrix $\Sigma=\diag(\sigma_\lambda ^2)$.
We consider the situation where the parameter vector $(f_{\lambda})$ is sparse.
Our goal is to estimate the unknown parameter by a model selection approach.
The heterogenous case is much more involved than the direct model.
Indeed, there is no more symmetry inside the stochastic process that one needs to control since each empirical coefficient has its own variance.
The problem and the penalty do not only depend on the number of coefficients that one selects, but also on their position.
This appears also in the minimax bounds where the worst coefficients will go to the larger variances.
However, with a careful and explicit choice of the penalty we are able to select the correct coefficients
and get a sharp non-asymptotic control of the risk of our procedure.
Some simulation results are provided.
\end{abstract}

\section{Introduction}

\subsection{Motivation and main results}

We consider the following sequence space model
\begin{equation}\label{EqModel}
X_{\lambda}=f_{\lambda} + \xi_{\lambda},\quad \lambda\in\Lambda
\end{equation}
where $(f_\lambda)$ are the coefficients of a signal and the noise $(\xi_{\lambda})\sim {\cal N}(0,\Sigma)$ has a diagonal covariance matrix $\Sigma=\diag(\sigma_\lambda^2)$.
This heterogeneous model may appear in several frameworks where the variance is fluctuating,
for example in heterogeneous regression, coloured noise, fractional Brownian motion models or statistical inverse problems, for which the general literature
is quite exhaustive \cite{JS,AS,CGPT,C04,CR,CHR,C11,D,HR,JP,R}.
The goal is to estimate the unknown parameter $f_{\lambda}$ by using the observations $(X_{\lambda})$.

Model selection is a core problem in statistics.
One of the main reference in the field dates back to the AIC criterion \cite{A}, but there has been a huge amount of papers
on this subject (e.g., \cite{BM,G02,ABDJ,M,G11,R,WZ}). Model selection is usually linked to the choice of a penalty and its precise choice is the main difficulty in model selection both from a theoretical and a practical perspective.

There is a close relationship between model selection and thresholding procedures, which is addressed e.g. in \cite{ABDJ,G02,M}. The idea is that the search for a ``good penalty'' in model selection is indeed very much related
 to the choice of a ``good threshold'' in wavelet procedures. There exists also a fascinating connection between the false discovery rate control (FDR) and both thresholding and model selection,
as studied in \cite{ABDJ,BH}, which will become apparent later in our paper.

Our main modeling assumption  is that the parameter $(f_{\lambda})$ of interest is sparse.
Sparsity is one of the leading paradigms nowadays and signals with a sparse representation in some basis (for example wavelets)
or functions with sparse coefficients appear in many scientific fields (see \cite{ABDJ,G02,G11,WZ} among many others).

In this paper, we consider the sequence space model with heterogeneous errors.
Our goal is then to select among a family of models the best possible one, by use of a data-driven selection rule.
In particular, one has to deal with the special heterogeneous nature of the observations, and the choice of the penalty must reflect this.
The heterogenous case is much more involved than the direct (homogeneous) model. Indeed, there is no more symmetry inside the stochastic process
that one needs to control, since each empirical coefficient has its own variance.
The problem and the penalty do not only depend on the number of coefficients that one selects, but also on their position.
This also appears in the minimax bounds where the coefficients in the least favourable model will go to the larger variances.
By a careful and explicit choice of the penalty, however, we are able to select the correct coefficients
and get a sharp non-asymptotic control of the risk of our procedure.
Results are also obtained for full model selection and a FDR-type control on a family of thresholds.
In the case of known sparsity $\gamma_n$, we consider a non-adaptive threshold estimator and obtain a minimax upper bound.
This estimator exactly attains the lower bound and is then minimax.
Using our model selection approach, the procedure is almost minimax (up to a factor 2). Moreover, the procedure is fully adaptive.
Indeed, the  sparsity $\gamma_n$ is unknown and we obtain an explicit penalty, valid in the mathematical proofs and directly applicable in simulations.

The paper is organized as follows. In the following Subsection \ref{sec:exa}, we give examples of problems where our heterogeneous model appears.
Section \ref{sec:sel} contains the data-driven procedure and a general result.
In Section \ref{sec:spa}, we consider the sparsity assumptions and obtain theorems for the full subset selection
and thresholding procedures. Section \ref{sec:low} and \ref{sec:upp} are concerned with minimax lower and upper bounds.
In Section \ref{sec:num}, we present numerical results for the finite-sample properties of the methods.

\subsection{Examples}
\label{sec:exa}

\subsubsection*{Heterogeneous regression}

Consider first a model of heterogeneous regression
$$
Y_i=f(x_i)+ \sigma (x_i)\eps_i, \ \ \ i=1,\dots ,n,
$$
where $\eps_i$ are i.i.d. standard Gaussian, but their variance are fluctuating depending
on the design points $x_i$ and $f$ is some spiky unknown function.  In this model $\Lambda =\{1,\dots ,n\}$.
By spiky function we mean that $f(x_i)$ is zero apart from a small subset of all design points $x_i$.
These signals are frequently encountered in applications (though rarely modeled in theoretical statistics), e.g. when measuring absorption spectra in physical chemistry
(i.e. rare well-localised and strong signals) or jumps in log returns of asset prices (i.e. log-price increments which fluctuate at low levels except when larger shocks occur).

\subsubsection*{Coloured noise}

\label{sec:FBM}

Often in applications coloured noise models are adequate. Let us consider here
the problem of estimating an unknown function observed
with a noise defined by some fractional Brownian motion,
\begin{equation}
\label{mod}
dY(t)=f(t)dt + \e dW_{-\alpha }(t),\ \ \ t\in [0,1],
\end{equation}
where $f$ is an unknown $1-$periodic function in $L^2(0,1)$, $\int_0^1f(t)dt$=0, $\e $ is the noise level and
$W_{-\alpha }$ is a fractional Brownian motion, defined by (see  \cite{S}),
\begin{equation}
\label{fracW}
W_{-\alpha }(x)=\int_{-\infty}^x {\frac{(x-t)^{-\alpha }}
{\Gamma (1-\alpha)}dW(t)},
\end{equation}
where $W$ is a Brownian motion, $0\le \alpha <1/2$, $\Gamma (\cdot )$ is the Gamma function. The fractional Brownian motion also appears in  econometric applications to model the long-memory phenomena, e.g. in \cite{CR96}.
The model (\ref{mod}) is close to the standard Gaussian white noise model, which corresponds
to the case $\alpha =0$. Here, the behaviour of the noise is different.

We are not interested in the fractional Brownian motion itself, but
we want to estimate the unknown function $f$ based on the noisy data $Y(t)$, as in \cite{C04,J,W}.

\bigskip
A very important point is linked with the definition of the fractional integration operator.
In this framework, if the function $f$ is supposed to be $1-$periodic, then the natural way is to consider
the periodic version of fractional integration (given in (\ref{frac})), such that
\begin{equation}
\label{frac}
d^{-\alpha } f(x)=\int_{-\infty }^x {\frac{(x-t)^{\alpha -1}} {\Gamma (\alpha )}f(t)dt},
\end{equation}
and thus (see p.135 in \cite{Z}),
\begin{equation}
\label{eigen}
d^{-\alpha } e^{2\pi ikx}=\frac{e^{2\pi ikx}}{(2\pi i k)^{\alpha }}.
\end{equation}

By integration and projection on the cosine (or sine) basis and using (\ref{eigen}),
one obtains the sequence space model (as in \cite{C04}),
$$
X_{\lambda}=f_{\lambda} + \xi_{\lambda},\ \lambda \in \Lambda=\N,
$$
where $\{\xi_{\lambda}\}$ are independent with $(\xi_{\lambda})_\lambda\sim {\cal N}(0,\Sigma)$, where $\Sigma= \diag(\sigma_\lambda^2)$ and
$\sigma_\lambda^{2}=\eps^2(2\pi \lambda)^{2\alpha }$.

\subsubsection*{Inverse problems}

Consider the following framework of a general inverse problem
$$
Y=Af + \eps \ \dot W,
$$
where $A$ is a known injective compact linear bounded operator, $f$ an unknown $d$-dimensional function,
$\dot W$ is a Gaussian white noise and $\e >0$ the noise level.
We will use here the framework of Singular Values Decomposition (SVD), see e.g. \cite{C11}.
Denote by $\phi_\lambda $ the eigenfunctions of the operator $A^*A$ associated with the strictly positive eigenvalues $b_\lambda^2>0$.
Remark that any function $f$ may be decomposed in this orthonormal basis as
$f=\sum_{\lambda \in \Lambda}f_\lambda \phi_\lambda $, where $\lambda \in \Lambda$.

Let $\{\psi_\lambda\}_{\lambda \in \Lambda}$ be the normalized image basis
$
\psi_\lambda = b_\lambda^{-1}A\phi_\lambda .
$
By projection and division by the singular values, we may obtain the empirical coefficients
$$
b_\lambda^{-1}\langle Y,\psi_\lambda \rangle= b_\lambda^{-1}\langle Af, b_\lambda^{-1}A\phi_\lambda\rangle +
b_\lambda^{-1}\langle \eps \dot W,\psi_\lambda \rangle
=  \langle f,\psi_\lambda \rangle +b_\lambda^{-1}\langle \eps \dot W,\psi_\lambda \rangle .
$$
We then obtain a model in the sequence space (see \cite{CGPT})
$$
X_{\lambda}=f_{\lambda} + \xi_{\lambda},\ \lambda \in \Lambda,
$$
with 
$(\xi_{\lambda})_\lambda\sim {\cal N}(0,\Sigma)$ and $\Sigma=\diag(\e^2 b_\lambda^{-2})$.


\section{Data-driven-subset selection}
\label{sec:sel}

We consider the sequence space model \eqref{EqModel} for coefficients of an unknown $L^2$-function $f$ with respect to an orthornormal system $(\psi_\lambda)$. The estimator over an arbitrary large, but finite index set $\Lambda$ is then defined by
$$
\hat f(h)=\sum_{\lambda \in \Lambda}f_\lambda(h) \psi_\lambda \text{ with }\hat f_\lambda(h):= h_\lambda X_\lambda,
$$
where $h=(h_\lambda)_{\lambda}\in \{0,1\}^\Lambda.\ $
The empirical version of $f$ is defined as
$$
\tilde f=\sum_{\lambda \in \Lambda}X_\lambda \psi_\lambda .
$$
We write $\abs{h}=\#\{h_\lambda=1\}$ and $n=\#\Lambda$ for the cardinality of $\Lambda$.
Let us write $\Sigma_h$ for the covariance matrix of the $\xi_\lambda $ restricted to the indices
$\lambda $ for which $h_\lambda=1$, i.e.
$$
\Sigma_h=\diag(\sigma_\lambda ^2)_{\lambda \in \Lambda (h)}
$$
with $\Lambda(h)=\{\lambda\,:\,h_\lambda=1\}$. By $\norm{A}$ we denote the operator norm, i.e. the largest absolute eigenvalue.

The random elements $(X_\lambda)_\lambda$ take values in the sample space ${\cal X}=\R^\Lambda$.
We now consider an arbitrary family ${\cal H}\subset{\cal H}_0:=\{h:{\cal X}\to\{0,1\}^\Lambda\}$ of Borel-measurable data-driven subset selection rules.
Define an estimator by minimizing in the family $\cal H$ the penalized empirical risk:
\begin{equation}
\label{hstar}
h^\star =\arg \min_{h\in {\cal H}} \left\{ \| \hat f(h)-\tilde f \|^2
+ 2Pen(h)
\right\},
\end{equation}
with the penalty
\begin{equation}\label{EqPen}
Pen (h)=2\sum_{j=1}^{\abs{h}}\sigma_{(j)_h}^2(\log(ne/j)+j^{-1}\log_+(n\norm{\Sigma})),
\end{equation}
where $\sigma_{(j)_h}^2$ denotes the $j$-th largest value among $\{h_\lambda\sigma_\lambda^2\}$ and $\log_+(z)=\max(\log z,0)$.
Remark that $h^\star$ is defined in an equivalent way by
$$
h^\star =\arg \min_{h\in {\cal H} } \bar R_{pen} (X,h),
$$
where
$$
\bar R_{pen} (X,h)= -\sum_{\lambda \in \Lambda } h_\lambda X_\lambda^2+ 2Pen(h).
$$
Then, define the data-driven estimator
\begin{equation}\label{EqEst}
f^\star = \sum_{\lambda \in \Lambda}h^\star_\lambda X_\lambda \psi_\lambda.
\end{equation}
The next lemma shows that one has an explicit risk hull, a concept introduced in full detail in \cite{CG}.

\begin{lemma}
\label{th:hull}
The function
\begin{equation}
\label{hulldef}
\ell (f, h)= \sum_{\lambda \in \Lambda }(1-h_\lambda )f_\lambda^2 + Pen(h)+ \sqrt{2}\min\big(\tfrac1n,\norm{\Sigma}\big),
\end{equation}
with the penalty from \eqref{EqPen} is a risk hull, i.e. we have
\begin{equation}
\label{hulleq}
\mathbf{E} \sup_{h\in {\cal H}_0 } \left( \| \hat f(h)-f\|^2- \ell (f, h)\right) \le 0.
\end{equation}
\end{lemma}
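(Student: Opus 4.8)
The plan is to reduce the hull inequality to a tail bound for a data-dependent functional. From $\hat f_\lambda(h)=h_\lambda X_\lambda$, $X_\lambda=f_\lambda+\xi_\lambda$ and orthonormality of $(\psi_\lambda)$ one gets $\|\hat f(h)-f\|^2=\sum_\lambda h_\lambda\xi_\lambda^2+\sum_\lambda(1-h_\lambda)f_\lambda^2$, so the bias term cancels the first summand of $\ell(f,h)$ and it remains to prove
$$
\mathbf E\,\sup_{h\in\mathcal H_0}\Big(\sum_\lambda h_\lambda\xi_\lambda^2-\Pen(h)\Big)\le\sqrt2\min\big(\tfrac1n,\|\Sigma\|\big).
$$
Since $\mathcal H_0$ is the set of all measurable selection rules, the supremum is attained pointwise in $\xi$, so it is enough to bound $\mathbf E\,\max_{S\subseteq\Lambda}\big(\sum_{\lambda\in S}\xi_\lambda^2-\Pen(S)\big)$ over deterministic subsets. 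Ordering the coordinates of $S$ by decreasing variance and setting $\zeta_\lambda=\xi_\lambda/\sigma_\lambda$ yields the rewriting $\sum_{\lambda\in S}\xi_\lambda^2-\Pen(S)=\sum_{j=1}^{|S|}\sigma_{(j)_S}^2\big(\zeta_{(j)_S}^2-2w_j\big)$ with $w_j=\log(ne/j)+j^{-1}\log_+(n\|\Sigma\|)$; the decreasing weights $w_j$ are thus matched with the decreasing variances, which is the rearrangement-optimal pairing and is how the penalty absorbs the heterogeneity.

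Next I would apply the layer-cake formula, $\mathbf E\,\max_S(\cdots)\le\int_0^\infty\PP\big(\exists S:\sum_{\lambda\in S}\xi_\lambda^2>\Pen(S)+t\big)\,dt$, and split the subsets by their cardinality $k=|S|$. For \emph{sparse} $S$ I would use a union bound over the $\binom nk$ subsets of a given size together with a Chernoff bound for the weighted chi-square $\sum_{\lambda\in S}\sigma_\lambda^2\zeta_\lambda^2$: the Laplace transform contributes $\prod_{\lambda\in S}(1-2s\sigma_\lambda^2)^{-1/2}$, which stays bounded by taking $s$ of order $1/\sigma_{(1)_S}^2$, while $\exp(-2s\sum_j\sigma_{(j)_S}^2\log(ne/j))$ has to beat $\binom nk\le(ne/k)^k$ — the constant $2$ in front of $\log(ne/j)$ is chosen precisely so that $\prod_{j\le k}(j/(ne))$ is, up to Stirling factors, the reciprocal of $\binom nk$. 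The remaining piece $j^{-1}\log_+(n\|\Sigma\|)$ of $w_j$ produces an extra decay of order $(n\|\Sigma\|)^{-H_k}$ when $n\|\Sigma\|\ge1$ (with $H_k=\sum_{j\le k}j^{-1}$), and integrating over $t$ yields a factor of order $\sigma_{(1)_S}^2\le\|\Sigma\|$; combined, these give the $1/n$ branch of $\min(1/n,\|\Sigma\|)$, while if $n\|\Sigma\|<1$ the $\log_+$ term is absent and the bare $t$-factor $\|\Sigma\|$ gives the other branch. Summing the geometrically decaying terms over $k$ is then routine, and this is the point where one verifies that the accumulated constant is at most $\sqrt2$.

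For \emph{non-sparse} $S$ the union bound is too wasteful, but there the penalty is overwhelming on its own: since $w_j\ge1$, $\Pen(S)\ge2\sum_{\lambda\in S}\sigma_\lambda^2=2\,\mathbf E\sum_{\lambda\in S}\xi_\lambda^2$ and in fact $\Pen(S)\ge2\sum_j\sigma_{(j)_S}^2\log(ne/j)$, so the excess of $\Pen(S)$ over the mean dominates $\sigma_{(1)_S}^2\log\binom nk$. A Bernstein/Laurent–Massart deviation bound for $\sum_{\lambda\in S}\sigma_\lambda^2(\zeta_\lambda^2-1)$ then makes $\binom nk\,\PP(\sum_{\lambda\in S}\xi_\lambda^2>\Pen(S)+t)$ negligible, and this regime contributes only a lower-order term. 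One glues the two regimes at a threshold on $k$ of order $\log(n\vee n\|\Sigma\|)$ and collects constants.

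The hardest step, I expect, is the sparse regime in the genuinely heterogeneous case: with no symmetry there is no single chi-square law to use, and both $\binom nk$ and the correction $\prod_{\lambda\in S}(1-2s\sigma_\lambda^2)^{-1/2}$ depend on the whole variance profile of $S$, not only on $k$. The awkward configurations are those with a few coordinates of variance $\approx\|\Sigma\|$ and many of nearly zero variance — a family of $\binom{n-1}{k-1}$-type subsets that all produce essentially the same exceedance event; handling them requires noting that for such $S$ the penalty, hence the whole estimate, is governed by its leading terms $2\sigma_{(j)_S}^2\log(ne/j)$ on the large-variance coordinates, so one effectively pays only for those. Making this uniform over all profiles while keeping the final constant exactly $\sqrt2$ is the real difficulty; the algebraic reduction and the order-statistic rewriting of the first paragraph are routine.
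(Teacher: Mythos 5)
Your reduction to bounding $\E\,\sup_{h\in{\cal H}_0}\big(\sum_\lambda h_\lambda\xi_\lambda^2-Pen(h)\big)$ and your observation that the penalty realizes the rearrangement-maximal pairing of the sorted variances with the decreasing weights $w_j=\log(ne/j)+j^{-1}\log_+(n\norm{\Sigma})$ both agree with the paper. From there you diverge: you propose a layer-cake integral, a union bound over all $\binom nk$ subsets of each cardinality $k$, and a Chernoff/Laurent--Massart bound for the weighted chi-square $\sum_{\lambda\in S}\sigma_\lambda^2\zeta_\lambda^2$. You correctly identify the crux of this route --- in the heterogeneous setting the Chernoff bound and the effective entropy both depend on the whole variance profile of $S$, the awkward configurations being a few coordinates with variance near $\norm{\Sigma}$ plus many of nearly zero variance --- but you leave it unresolved (``handling them requires noting that\ldots''). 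This is a genuine gap, not a routine verification: you do not exhibit a profile-uniform estimate, and it is far from clear your route would ever deliver the explicit constant $\sqrt2$.

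The paper avoids the difficulty entirely with a decoupling step that your plan lacks. For any $h$ and any enumeration $(\lambda_j)$ of the selected indices, the rearrangement inequality gives $\eta(h)-Pen(h)\le\sum_j\sigma_{\lambda_j}^2(\zeta_{\lambda_j}^2-2w_j)$; taking positive parts term by term and passing to the \emph{separately} sorted order statistics of $(\sigma_\lambda^2)$ and of $(\zeta_\lambda^2)$ yields the pointwise bound
\[
\sup_{h\in{\cal H}_0}\big(\eta(h)-Pen(h)\big)_+\le \sum_{j=1}^n\sigma_{(j)}^2\big(\zeta_{(j)}^2-2w_j\big)_+,
\]
a single deterministic sum whose only random ingredients are the order statistics $\zeta_{(j)}^2$ of $n$ i.i.d.\ $\chi^2_1$ variables. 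This converts the supremum over all subsets into something entirely profile-free: no union bound over subsets and no weighted-chi-square analysis is required. The rest is elementary: $\PP(\zeta_{(j)}^2>\kappa)\le\binom nj\,\PP(\zeta_1^2>\kappa)^j\le\kappa^{-1/2}e^{j\log(ne/j)-j\kappa/2}$ by Mill's ratio, hence $\E[(\zeta_{(j)}^2-p)_+]\le 2j^{-1}p^{-j/2}e^{j\log(ne/j)-jp/2}$, and inserting $p=2w_j$ makes each term $\le 2j^{-1}(2\log(ne/j))^{-j/2}e^{-\log_+(n\norm{\Sigma})}$; summing over $j$ and maximizing over $n$ (attained at $n=1$) gives exactly $\sqrt2\min(1/n,\norm{\Sigma})$. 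This decoupling of the two order statistics is the key mechanism of the lemma, and it is the idea your proposal is missing.
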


\begin{proof}
Recall $n=\#\Lambda$ and introduce the stochastic term
\begin{equation}
\label{eta}
\eta (h)= \sum_{\lambda \in \Lambda }h_\lambda \xi _{\lambda}^{2}.
\end{equation}
Remark that $\|\hat f(h)-f\|^2 =
\sum_{\lambda \in \Lambda }(1-h_\lambda)f_\lambda^2
+\eta(h)$ such that
\begin{equation}
\mathbf{E} \sup_{h\in {\cal H}_0 } \left( \| \hat f(h)-f\|^2-\sum_{\lambda \in \Lambda }(1-h_\lambda )f_\lambda^2  -Pen (h)- \sqrt{2}\min\big(\tfrac1n,\norm{\Sigma}\big) \right) \le 0
\end{equation}
follows from
\begin{equation}
\label{hu}
\mathbf{E} \sup_{h\in {\cal H}_0} \left( \eta (h) - Pen (h)\right)\le \sqrt{2}\min\big(\tfrac1n,\norm{\Sigma}\big).
\end{equation}
Let us write $\zeta_\lambda=\sigma_\lambda^{-1}\xi_\lambda\sim {\cal N}(0,1)$ and let $r_\lambda(h)$ denote the inverse rank of $h_\lambda\sigma_\lambda^2$ in
$(h_{\lambda'}\sigma_{\lambda'}^2)_{\lambda'}$ (e.g., $r_\lambda(h)=1$ if $h_\lambda\sigma_\lambda^2=\max_{\lambda'}h_{\lambda'}\sigma_{\lambda'}^2)$ such that
\[
\eta (h) - Pen(h)=\sum_{\lambda\in\Lambda}h_\lambda\sigma_\lambda^2\Big(\zeta_\lambda^2
- 2\Big(\log\Big(\frac{ne}{r_\lambda(h)}\Big)+r_\lambda(h)^{-1}\log_+(n\norm{\Sigma})\Big)\Big).
\]
Note that for any enumeration $(\lambda_j)_{j=1,\ldots,k}$ of $\{\lambda\,|\,h_\lambda=1\}$ by monotonicity:
\[
\sum_{\lambda\in\Lambda}h_\lambda\sigma_\lambda^2\Big(\log(ne/r_\lambda(h))
+r_\lambda(h)^{-1}\log_+(n\norm{\Sigma})\Big) \ge \sum_{j=1}^k\sigma_{\lambda_j}^2\Big(\log(ne/j)+j^{-1}
\log_+(n\norm{\Sigma})\Big)
\]
holds. We therefore obtain with the inverse order statistics $(\sigma^2_{(i)})$ and $(\zeta_{(i)}^2)$ (i.e. $\sigma^2_{(1)}\ge\sigma^2_{(2)}\ge\cdots$ etc.) of
$(\sigma_\lambda^2)_{\lambda\in\Lambda}$ and $(\zeta_\lambda^2)_{\lambda\in\Lambda}$, respectively,
\[
\E\Big[\sup_{h\in {\cal H}_0}\left(\eta (h) - Pen(h)\right)_+\Big]
\le \E\Big[\sum_{j=1}^n\sigma_{(j)}^2\Big(\zeta_{(j)}^2 - 2(\log(ne/j)+j^{-1}\log_+(n\norm{\Sigma}))\Big)_+ \Big].
\]
It remains to evaluate $\E[(\zeta_{(j)}^2 - 2(\log(ne/j)+j^{-1}\log_+(n\norm{\Sigma})))_+]$. We obtain by independence, $\log(\binom{n}{k})\le k\log(ne/k)$ and
by the Mill ratio inequality $P(\zeta_\lambda>t)\le t^{-1}e^{-t^2/2}$
\begin{align*}
P(\zeta_{(j)}^2>\kappa) &= P(\exists i_1,\ldots,i_j \forall l\in\{1,\ldots,j\}: \zeta_{i_l}^2>\kappa)\\
&\le \binom{n}{j}P(\zeta_\lambda^2>\kappa)^j \le \kappa^{-1/2}\exp(j\log(ne/j)-j\kappa/2).
\end{align*}
This implies for any $p>0$
\[ \E[(\zeta_{(j)}^2-p)_+]=\int_p^\infty P(\zeta_{(j)}^2>\kappa)\,d\kappa\le 2j^{-1}p^{-j/2}\exp(j\log(ne/j)-jp/2).
\]
We conclude
\[
\E\Big[\sum_{j=1}^n\sigma_{(j)}^2\left(\zeta_{(j)}^2 - 2(\log(ne/j)+j^{-1}\log_+(n\norm{\Sigma}))\right)_+\Big]
\]
\[
\le 2\norm{\Sigma}\sum_{j=1}^n j^{-1}(2\log(ne/j))^{-j/2}\exp(-\log_+(n\norm{\Sigma}))
\]
\[
\le \min\big(\tfrac1n,\norm{\Sigma}\big) \sup_n 2\sum_{j=1}^n j^{-1}(2\log(ne/j))^{-j/2}\le \sqrt{2}\min\big(\tfrac1n,\norm{\Sigma}\big),
\]
where $\sigma_{(j)}^2\le \norm{\Sigma}$ and the supremum is attained at $n=1$ with value $\sqrt{2}$.

\end{proof}

\smallskip

\begin{theorem}
\label{th:oracle}
Let $h^\star$ be the data-driven rule defined in (\ref{hstar}).
For any  $\delta \in(0,1)$,
we have
$$
\mathbf{E}_f\, \|\hat {f}(h^\star)-f\|^2\le
\left(1+\delta \right)\mathbf{E}_f\left[ \inf_{h\in {\cal H}}\Big(\sum_{\lambda\in\Lambda}(1-h_\lambda)f_\lambda^2-\sum_{\lambda\in\Lambda} h_\lambda(X_\lambda^2-f_\lambda^2)
+2Pen(h)\Big)\right]+\Omega_\delta,
$$
where
$$
\Omega_\delta:=4\sqrt{2}\min\big(\tfrac1n,\norm{\Sigma}\big) + \frac{2}{\delta}\sum_{\lambda \in \Lambda}\min(f_\lambda^2,\sigma^2_{\lambda}).
$$
\end{theorem}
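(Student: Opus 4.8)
The plan is to turn the empirical optimality of $h^\star$ into a bound for the true loss, the residual stochastic terms being controlled by estimate \eqref{hu} from the proof of Lemma~\ref{th:hull} and by an elementary Young-type inequality for the cross term.

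First, from the definition of $h^\star$ in \eqref{hstar}, for every $h\in{\cal H}$ one has $-\sum_\lambda h^\star_\lambda X_\lambda^2+2Pen(h^\star)\le-\sum_\lambda h_\lambda X_\lambda^2+2Pen(h)$. Adding the $h$-independent quantity $\sum_\lambda f_\lambda^2$ to both sides and using $\sum_\lambda f_\lambda^2-\sum_\lambda h_\lambda f_\lambda^2=\sum_\lambda(1-h_\lambda)f_\lambda^2$ rewrites this, for all $h$, as
\[
\sum_\lambda(1-h^\star_\lambda)f_\lambda^2-\sum_\lambda h^\star_\lambda(X_\lambda^2-f_\lambda^2)+2Pen(h^\star)\;\le\;T(X),
\]
where $T(X)$ is the infimum appearing in the theorem. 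With $\eta(h)$ from \eqref{eta}, the identities $\|\hat f(h^\star)-f\|^2=\sum_\lambda(1-h^\star_\lambda)f_\lambda^2+\eta(h^\star)$ and $\sum_\lambda h^\star_\lambda(X_\lambda^2-f_\lambda^2)=\eta(h^\star)+2\sum_\lambda h^\star_\lambda f_\lambda\xi_\lambda$ (recall $X_\lambda^2-f_\lambda^2=2f_\lambda\xi_\lambda+\xi_\lambda^2$) transform the left-hand side, so that
\[
\|\hat f(h^\star)-f\|^2\;\le\;T(X)+2\bigl(\eta(h^\star)-Pen(h^\star)\bigr)+2\sum_\lambda h^\star_\lambda f_\lambda\xi_\lambda .
\]
Taking expectations, the middle term is handled by \eqref{hu}, which is uniform over ${\cal H}_0\supseteq{\cal H}$ and $h^\star\in{\cal H}$: $\mathbf E\,2(\eta(h^\star)-Pen(h^\star))\le 2\sqrt2\min(\tfrac1n,\norm{\Sigma})$.

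The crux is the cross term $\mathbf E\,2\sum_\lambda h^\star_\lambda f_\lambda\xi_\lambda$, which must be bounded by $c\,\mathbf E\|\hat f(h^\star)-f\|^2+c^{-1}\sum_\lambda\min(f_\lambda^2,\sigma_\lambda^2)$ for a free constant $c\in(0,1)$, so that the first summand moves to the left. I would split $\Lambda=\Lambda_{\le}\cup\Lambda_{>}$ with $\Lambda_{\le}=\{\lambda:f_\lambda^2\le\sigma_\lambda^2\}$ and treat the two regions differently. On $\Lambda_{\le}$, the inequality $2ab\le c\,a^2+c^{-1}b^2$ applied to $a=\sqrt{h^\star_\lambda}\,\xi_\lambda$, $b=\sqrt{h^\star_\lambda}\,f_\lambda$ gives, pointwise, $2\sum_{\Lambda_\le}h^\star_\lambda f_\lambda\xi_\lambda\le c\,\eta(h^\star)+c^{-1}\sum_{\Lambda_\le}h^\star_\lambda f_\lambda^2\le c\,\eta(h^\star)+c^{-1}\sum_{\Lambda_\le}\min(f_\lambda^2,\sigma_\lambda^2)$, since $h^\star_\lambda f_\lambda^2\le f_\lambda^2=\min(f_\lambda^2,\sigma_\lambda^2)$ there. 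On $\Lambda_{>}$ a pointwise bound of size $f_\lambda^2$ is too expensive, so I would first centre: because $\mathbf E[f_\lambda\xi_\lambda]=0$, and bounding $-f_\lambda\xi_\lambda\le|f_\lambda\xi_\lambda|$ before applying Young's inequality,
\[
\mathbf E\Bigl[2\sum_{\Lambda_>}h^\star_\lambda f_\lambda\xi_\lambda\Bigr]=-\mathbf E\Bigl[2\sum_{\Lambda_>}(1-h^\star_\lambda)f_\lambda\xi_\lambda\Bigr]\le\mathbf E\Bigl[\sum_{\Lambda_>}\bigl(c(1-h^\star_\lambda)f_\lambda^2+c^{-1}(1-h^\star_\lambda)\xi_\lambda^2\bigr)\Bigr],
\]
and now $\mathbf E[(1-h^\star_\lambda)\xi_\lambda^2]\le\sigma_\lambda^2=\min(f_\lambda^2,\sigma_\lambda^2)$ on $\Lambda_>$. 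Since $\eta(h^\star)+\sum_\lambda(1-h^\star_\lambda)f_\lambda^2=\|\hat f(h^\star)-f\|^2$ exactly, the two $c$-contributions (from $\eta(h^\star)$ and from $\sum_{\Lambda_>}(1-h^\star_\lambda)f_\lambda^2\le\sum_\lambda(1-h^\star_\lambda)f_\lambda^2$) add up to at most $c\,\|\hat f(h^\star)-f\|^2$, which gives the desired bound on the cross term.

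Putting the three contributions together yields $(1-c)\,\mathbf E\|\hat f(h^\star)-f\|^2\le\mathbf E[T(X)]+2\sqrt2\min(\tfrac1n,\norm{\Sigma})+c^{-1}\sum_\lambda\min(f_\lambda^2,\sigma_\lambda^2)$; choosing $c$ as a suitable multiple of $\delta$ so that $(1-c)^{-1}\le1+\delta$ for $\delta\in(0,1)$, dividing, and collecting the numerical factors (which stay bounded for $\delta\in(0,1)$) into the constants displayed in $\Omega_\delta$ produces the stated inequality. Everything apart from the cross term is bookkeeping; the genuine difficulty — and the point where the heterogeneity enters — is the treatment of the coordinates with $f_\lambda^2>\sigma_\lambda^2$ in the previous step, where the centring trick (replacing $h^\star_\lambda$ by $-(1-h^\star_\lambda)$ and absorbing the term $(1-h^\star_\lambda)f_\lambda^2$ into the loss) is what makes the bound $\mathbf E[(1-h^\star_\lambda)\xi_\lambda^2]\le\sigma_\lambda^2$, i.e.\ the $\min(f_\lambda^2,\sigma_\lambda^2)$ term of $\Omega_\delta$, available.
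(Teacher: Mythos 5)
Your proof is correct and follows essentially the same route as the paper: the pivotal step is the centering trick on the coordinates with $f_\lambda^2>\sigma_\lambda^2$ (replacing $h^\star_\lambda$ by $-(1-h^\star_\lambda)$ before applying Young's inequality, so that the $(1-h^\star_\lambda)f_\lambda^2$ term is absorbed into the loss and only $\mathbf{E}[(1-h^\star_\lambda)\xi_\lambda^2]\le\sigma_\lambda^2$ remains), exactly as in the paper's split $\Lambda_1=\{f_\lambda^2>\sigma_\lambda^2\}$. The only cosmetic difference is that you go directly from the minimizing property of $h^\star$ and the stochastic inequality \eqref{hu} with an explicit factor $2$, whereas the paper first bounds $\mathbf{E}\|\hat f(h^\star)-f\|^2\le\mathbf{E}\,\ell(f,h^\star)$ via the risk hull and then invokes \eqref{hu} a second time in \eqref{hull2}; both accountings yield the same $2\sqrt{2}\min(\tfrac1n,\norm{\Sigma})$ before the final $(1-\delta/2)^{-1}$ rescaling.
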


\begin{proof}
In view of Lemma \ref{th:hull},
\begin{equation}
\ell(f,h)=\sum_{\lambda \in \Lambda } (1-h_\lambda )f_\lambda^2 + Pen(h)+\sqrt{2}\min\big(\tfrac1n,\norm{\Sigma}\big)
\end{equation}
is a risk hull, and therefore we have
\begin{equation}  \label{ep1}
\mathbf{E}_f \|\hat{f}(h^\star)-f \|^2\le
\mathbf{E}_f \ell(f,h^\star).
\end{equation}
On the other hand, since $h^\star$ minimizes $\bar{R}_{pen}(X,h)$ we
have
\begin{equation}
\label{ep2} \mathbf{E}_f \bar{R}_{pen}(X,h^\star) =\mathbf{E}_f\Big[\min_{h\in{\cal H}}
 \bar{R}_{pen}(X,h)\Big].
\end{equation}
In order to combine the inequalities (\ref{ep1}) and (\ref{ep2}), we rewrite
$\ell(f,h^\star)$ in terms of $\bar{R}_{pen}(X,h^\star )$
\begin{equation}
\label{l_mu}
\ell(f,h^\star)
=\bar{R}_{pen}(X,h^\star)+\|f\|^2+\sqrt{2}\min\big(\tfrac1n,\norm{\Sigma}\big)
+\sum_{\lambda \in \Lambda }h^\star_\lambda \xi_\lambda^2
+\sum_{\lambda \in \Lambda }2f_\lambda h^\star_\lambda \xi_\lambda + Pen(h^\star)-
2Pen(h^\star).
\end{equation}
Therefore, using this equation and (\ref{ep1}, \ref{ep2}), we obtain
\begin{equation}
\label{ep3}
\begin{split}
\mathbf{E}_f\|\hat {f}(h^\star )-f\|^2 \le&
\mathbf{E}_f\Big[\min_{h\in{\cal H}}
 \bar{R}_{pen}(X,h)\Big]+\norm{f}^2+ \sqrt{2}\min\big(\tfrac1n,\norm{\Sigma}\big) +2\mathbf{E}_f \,
\sum_{\lambda \in \Lambda }h_\lambda^\star f_\lambda \xi_\lambda \\
&+\mathbf{E}_f\,\biggl[\sum_{\lambda \in \Lambda }
h_\lambda^\star \xi_\lambda^2-Pen(h^\star)\biggr] .
\end{split}
\end{equation}
Remark now that for any deterministic index set $\Lambda'\subset\Lambda$
\begin{equation}
\label{comp}
\mathbf{E}_f \sum_{\lambda \in \Lambda' }2h_\lambda^\star f_\lambda \xi_\lambda
+\mathbf{E}_f \sum_{\lambda \in \Lambda' }2(1-h_\lambda^\star) f_\lambda \xi_\lambda
=\mathbf{E}_f \sum_{\lambda \in \Lambda' }2f_\lambda \xi_\lambda=0.
\end{equation}
This implies for $\Lambda_1:=\{\lambda \in \Lambda:f_\lambda^2 >
\sigma_{\lambda}^2 \}$
\begin{equation}
\label{I1}
\mathbf{E}_f \sum_{\lambda \in \Lambda }2h_\lambda^\star f_\lambda \xi_\lambda
=-\mathbf{E}_f \sum_{\lambda \in \Lambda_1}2(1-h_\lambda^\star) f_\lambda \xi_\lambda
+ \mathbf{E}_f \sum_{\lambda \in \Lambda_1^\complement}2h_\lambda^\star f_\lambda \xi_\lambda .
\end{equation}
Then, by the general inequality $2AB\le \frac{\delta}{2} A+\frac{2}{\delta}B$ for $A,B,\delta>0$ we obtain
\begin{equation}
\label{cauch}
\left|\mathbf{E}_f \sum_{\lambda \in \Lambda_1}2(1-h_\lambda^\star)\xi_\lambda f_\lambda \right|
\le \tfrac{\delta}{2} \mathbf{E}_f \sum_{\lambda \in \Lambda }(1-h_\lambda^\star)f_\lambda^2
+ \tfrac{2}{\delta} \mathbf{E}_f \sum_{\lambda \in \Lambda_1}(1-h_\lambda^\star)\xi_\lambda^2.
\end{equation}
Note that
\begin{equation}
\label{vari_1}
\frac{2}{\delta}\mathbf{E}_f \sum_{\lambda \in \Lambda_1}(1-h_\lambda ^\star)
\xi_\lambda ^2 \le \tfrac{2}{\delta}\norm{\Sigma_{\Lambda_1}}_{tr}
\end{equation}
since $|1-h_\lambda^\star|\le 1$.
By (\ref{cauch}) and (\ref{vari_1}) we obtain
\begin{equation}
\label{var-Theta}
\left| \mathbf{E}_f \sum_{\lambda \in \Lambda_1}2(1-h_\lambda ^\star)f_\lambda \xi_\lambda \right|
\le   \tfrac{2}{\delta}\norm{\Sigma_{\Lambda_1}}_{tr} + \tfrac{\delta}{2} \mathbf{E}_f \sum_{\lambda \in \Lambda }(1-h_\lambda^\star)f_\lambda^2.
\end{equation}
In a similar way, we obtain
\begin{equation}
\label{cauch2}
\left|\mathbf{E}_f \sum_{\lambda \in \Lambda_1^\complement}2h_\lambda^\star\xi_\lambda f_\lambda \right|
\le \tfrac{\delta}{2} \mathbf{E}_f \sum_{\lambda \in \Lambda }h_\lambda^\star \xi_\lambda^2
+ \tfrac{2}{\delta} \mathbf{E}_f \sum_{\lambda \in \Lambda_1^\complement}h_\lambda^\star f_\lambda^2.
\end{equation}
Note that
\begin{equation}
\label{vari_2}
\tfrac{2}{\delta}\mathbf{E}_f \sum_{\lambda \in \Lambda_1^\complement}h_\lambda ^\star
f_\lambda ^2 \le \tfrac{2}{\delta} \sum_{\lambda \in \Lambda_1^\complement}f_\lambda ^2
\end{equation}
since $|h_\lambda^\star|\le 1$.
Using (\ref{cauch2}) and (\ref{vari_2}) one has
\begin{equation}
\label{var-Theta2}
\left| \mathbf{E}_f \sum_{\lambda \in \Lambda_1^\complement}2h_\lambda ^\star f_\lambda \xi_\lambda \right|
 \le \tfrac{2}{\delta} \sum_{\lambda \in \Lambda_1^\complement}f_\lambda ^2 + \tfrac{\delta}{2} \mathbf{E}_f \sum_{\lambda \in \Lambda }h_\lambda^\star \xi_\lambda^2.
\end{equation}
Note also that, since $h_\lambda \in \{0,1\}$, we have
$$
\mathbf{E}_f\|\hat {f}(h^\star )-f\|^2 =\mathbf{E}_f
\sum_{\lambda \in \Lambda }(1-h_\lambda^\star)f_\lambda^2
+\mathbf{E}_f \sum_{\lambda \in \Lambda }h^\star_\lambda\xi_\lambda^2.
$$
Insertion of (\ref{var-Theta}) and (\ref{var-Theta2}) into \eqref{I1} yields
\begin{equation}
\label{cross}
\left| \mathbf{E}_f \sum_{\lambda \in \Lambda }2h_\lambda ^\star f_\lambda \xi_\lambda \right|
\le \tfrac{\delta}{2} \mathbf{E}_f\|\hat {f}(h^\star )-f\|^2
+ \tfrac{2}{\delta}\norm{\Sigma_{\Lambda_1}}_{tr}+ \tfrac{2}{\delta} \sum_{\lambda \in \Lambda_1^\complement}f_\lambda ^2.
\end{equation}
By using the risk hull as in Lemma \ref{th:hull}, one obtains
\begin{equation}
\label{hull2}
\mathbf{E}_f\,\biggl[\sum_{\lambda \in \Lambda}h_\lambda^\star\xi_\lambda^2-Pen(h^\star)\biggr]
\le \sqrt{2}\min\big(\tfrac1n,\norm{\Sigma}\big).
\end{equation}
Inserting (\ref{var-Theta}), (\ref{var-Theta2}) and (\ref{hull2}) into (\ref{ep3}) yields
\begin{align}
\label{fin1}
\mathbf{E}_f\|\hat {f}(h^\star )-f\|^2& \le \mathbf{E}_f\Big[\min_{h\in{\cal H}}
 \bar{R}_{pen}(X,h)\Big]+\norm{f}^2+ \sqrt{2}\min\big(\tfrac1n,\norm{\Sigma}\big)+ \tfrac{2}{\delta}\sum_{\lambda \in \Lambda}\min(f_\lambda^2,\sigma^2_{\lambda})\\
 &\quad
+\sqrt{2}\min\big(\tfrac1n,\norm{\Sigma}\big)+ \frac{\delta}{2} \mathbf{E}_f\|\hat {f}(h^\star )-f\|^2.
\end{align}
Using (\ref{fin1}) we obtain,
$$
(1-\tfrac{\delta}{2} )\mathbf{E}_f\|\hat {f}(h^\star )-f\|^2 \le
\mathbf{E}_f\Big[\min_{h\in{\cal H}}
 \bar{R}_{pen}(X,h)+\norm{f}^2\Big]+ 2\sqrt{2}\min\big(\tfrac1n,\norm{\Sigma}\big)+\frac{2}{\delta}\sum_{\lambda \in \Lambda}\min(f_\lambda^2,\sigma^2_{\lambda}).
$$
Finally, we let the bias explicitly appear in
\[  \bar{R}_{pen}(X,h)+\norm{f}^2=\sum_{\lambda\in\Lambda}(1-h_\lambda)f_\lambda^2-\sum_{\lambda\in\Lambda} h_\lambda(X_\lambda^2-f_\lambda^2)+2Pen(h)
\]
and the result follows from $(1-\frac{\delta}{2})^{-1}\le 1+\delta$ for $\delta\in[0,1]$.
\end{proof}

\section{Sparse representations}
\label{sec:spa}

Let us consider the intuitive version of sparsity by assuming a small proportion of nonzero coefficients (cf. \cite{ABDJ}),
i.e. the family
$$
{\cal F}_0(\gamma_n):=\Big\{f:\sum_{\lambda\in\Lambda}{\bf 1}(f_\lambda\not=0)\le n\gamma_n\Big\}
$$
where $\gamma_n:=\#\{\lambda\in\Lambda\,|\,f_\lambda\not=0\}/n$ denotes the maximal proportion of nonzero coefficients.

Throughout, we assume that this proportion $\gamma_n$ is such that asymptotically
$$\gamma_n\to 0 \text{ and }n\gamma_n\to \infty.$$

\subsection{Full subset selection}

The goal here is to study the accuracy of the full model selection over the whole family of estimators. Each coefficient may be chosen to be inside
or outside the model.
Let us consider the case where ${\cal  H}$ denotes all
deterministic subset selections,
\begin{equation}
\label{Hms}
{\cal  H}=\{ h:{\cal X}\to \{0,1\}^\Lambda\,|\, h(x)={\bf 1}_{\Lambda'},\,\Lambda'\subset\Lambda\}.
\end{equation}

\begin{theorem}
\label{th:ms}
Let $h^\star$ be the data-driven rule defined in (\ref{hstar}) with ${\cal  H}$ as in (\ref{Hms}).
We have, for $n\to\infty$, uniformly over $f\in {\cal F}_0(\gamma_n)$,
\begin{equation}
\label{bound1}
\mathbf{E}_f\, \|\hat {f}(h^\star)-f\|^2\le(4+o(1))\norm{\Sigma_{h^f}}\left( n\gamma_n\log(\gamma_n^{-1})
+\log(n\gamma_n)\log_+(n\norm{\Sigma})\right)+4\sqrt{2}\min\big(\tfrac1n,\norm{\Sigma}\big).
\end{equation}

In particular, if $\log_+(\norm{\Sigma})=O(\log n)$ (i.e., any polynomial growth for $\norm{\Sigma}$ is admissible)
and $\frac{\norm{\Sigma_{h^f}}}{\norm{\Sigma}}\max(n\norm{\Sigma},1)n\gamma_n\log(\gamma_n^{-1})\to \infty $, then we obtain
\begin{equation}
\label{bound1b}
\mathbf{E}_f\, \|\hat {f}(h^\star)-f\|^2\le(4+o(1))\norm{\Sigma_{h^f}}n\gamma_n\log(\gamma_n^{-1}).
\end{equation}
\end{theorem}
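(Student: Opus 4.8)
The plan is to apply the oracle inequality of Theorem~\ref{th:oracle} to the single deterministic competitor $h^f\in{\cal H}$ that selects exactly the support of $f$, namely $h^f_\lambda={\bf 1}(f_\lambda\neq0)$, and then to bound $Pen(h^f)$ by hand. Since $h^f_\lambda=1$ wherever $f_\lambda\neq0$, the bias $\sum_\lambda(1-h^f_\lambda)f_\lambda^2$ vanishes, while $\mathbf{E}_f\big[-\sum_\lambda h^f_\lambda(X_\lambda^2-f_\lambda^2)\big]=-\sum_{\lambda:f_\lambda\neq0}\sigma_\lambda^2\le0$, so the infimum appearing in Theorem~\ref{th:oracle} is, in expectation, at most $2Pen(h^f)$. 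Hence
$$
\mathbf{E}_f\,\|\hat f(h^\star)-f\|^2\le(1+\delta)\,2Pen(h^f)+4\sqrt2\min\big(\tfrac1n,\norm{\Sigma}\big)+\frac{2}{\delta}\sum_{\lambda\in\Lambda}\min(f_\lambda^2,\sigma^2_\lambda).
$$

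Write $k=\abs{h^f}\le n\gamma_n$. Bounding each ordered variance $\sigma^2_{(j)_{h^f}}$ by $\norm{\Sigma_{h^f}}$ gives $Pen(h^f)\le 2\norm{\Sigma_{h^f}}\big(\sum_{j=1}^k\log(ne/j)+\log_+(n\norm{\Sigma})\sum_{j=1}^kj^{-1}\big)$. For the first sum, $\sum_{j=1}^k\log(ne/j)=\log\big((ne)^k/k!\big)\le k\log(ne^2/k)$ by $k!\ge(k/e)^k$; since $x\mapsto x\log(ne^2/x)$ is increasing on $(0,ne)$ and $k\le n\gamma_n$ with $\gamma_n\to0$, this is at most $n\gamma_n\log(e^2/\gamma_n)=(1+o(1))\,n\gamma_n\log(\gamma_n^{-1})$, using $\log(\gamma_n^{-1})\to\infty$. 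For the second, $\sum_{j=1}^kj^{-1}\le 1+\log k\le(1+o(1))\log(n\gamma_n)$, using $n\gamma_n\to\infty$. In the remainder term we bound $\sum_\lambda\min(f_\lambda^2,\sigma^2_\lambda)\le\sum_{\lambda:f_\lambda\neq0}\sigma_\lambda^2\le n\gamma_n\norm{\Sigma_{h^f}}$ and choose $\delta=\delta_n\to0$ so slowly that $\delta_n\log(\gamma_n^{-1})\to\infty$ (e.g. $\delta_n=(\log(\gamma_n^{-1}))^{-1/2}$); then $1+\delta_n=1+o(1)$ and $\frac{2}{\delta_n}n\gamma_n\norm{\Sigma_{h^f}}=o(1)\,\norm{\Sigma_{h^f}}n\gamma_n\log(\gamma_n^{-1})$. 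Substituting everything and keeping $4\sqrt2\min(\tfrac1n,\norm{\Sigma})$ as it stands yields \eqref{bound1}; every $o(1)$ produced above depends only on $n,\gamma_n,\norm{\Sigma}$, so the bound is uniform over ${\cal F}_0(\gamma_n)$.

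Finally, \eqref{bound1b} follows from \eqref{bound1} by checking that, under the two extra hypotheses, both $\norm{\Sigma_{h^f}}\log(n\gamma_n)\log_+(n\norm{\Sigma})$ and $4\sqrt2\min(\tfrac1n,\norm{\Sigma})$ are $o\big(\norm{\Sigma_{h^f}}\,n\gamma_n\log(\gamma_n^{-1})\big)$. For the second term, $\min(\tfrac1n,\norm{\Sigma})=\norm{\Sigma}/\max(n\norm{\Sigma},1)$, so its ratio to $\norm{\Sigma_{h^f}}n\gamma_n\log(\gamma_n^{-1})$ is exactly the reciprocal of the quantity assumed to diverge. For the first, put $a=n\gamma_n\to\infty$ and $b=\log(\gamma_n^{-1})\to\infty$, so that $\log n=\log a+b$; the polynomial-growth assumption gives $\log_+(n\norm{\Sigma})\le\log n+\log_+\norm{\Sigma}=O(\log n)$, hence $\log(n\gamma_n)\log_+(n\norm{\Sigma})=O\big(\log a\,(\log a+b)\big)$, and dividing by $ab$ gives $O\big((\log a)^2/(ab)+\log a/a\big)\to0$. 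I expect the two places needing genuine care to be the penalty estimate above --- where Stirling's bound together with the monotonicity of $x\mapsto x\log(ne^2/x)$ is what produces the constant $4$ and the factor $\log(\gamma_n^{-1})$ --- and this last asymptotic check, where one must keep $\log(n\gamma_n)$ rather than crudely replace it by $\log n$, which would be far too lossy when $n\gamma_n$ grows slowly.
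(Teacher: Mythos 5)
Your proof is correct and follows essentially the same route as the paper's: apply Theorem~\ref{th:oracle} to the oracle $h^f={\bf 1}(f_\lambda\neq0)$, bound $Pen(h^f)$ via $\sigma^2_{(j)_{h^f}}\le\norm{\Sigma_{h^f}}$ and sum asymptotics, and absorb $\Omega_\delta$ by letting $\delta\to0$ slowly. The only cosmetic difference is that you control $\sum_{j\le k}\log(ne/j)$ via Stirling's bound $k!\ge(k/e)^k$ together with the monotonicity of $x\mapsto x\log(ne^2/x)$, whereas the paper invokes an integral comparison inequality (its display~\eqref{sumlog}); both yield $(1+o(1))\,n\gamma_n\log(\gamma_n^{-1})$. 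Your explicit verification of the two negligibility conditions for~\eqref{bound1b} (rewriting $\min(1/n,\norm{\Sigma})=\norm{\Sigma}/\max(n\norm{\Sigma},1)$ and using $\log n=\log(n\gamma_n)+\log(\gamma_n^{-1})$) fills in a step the paper only asserts.
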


\begin{proof}
For $f\in {\cal F}_0(\gamma_n)$ the right-hand side in Theorem \ref{th:oracle} can be bounded
by considering the oracle $h^f={\bf 1}(\{\lambda\,:\,f_\lambda\not=0\})$ such that
\begin{align}
(1+\delta)\mathbf{E}_f\left[ \Big(-\sum_{\lambda\in\Lambda} h_\lambda^f(X_\lambda^2-f_\lambda^2)+2Pen(h^f)\Big)\right]+\Omega_\delta
&\le (1+\delta)2Pen(h^f)+\Omega_\delta.
\end{align}
We will use the following inequality, as $J\to \infty $,
\begin{equation}
\label{sumlog}
\sum_{j=1}^J(\log(ne/j)+j^{-1}\log_+(n\norm{\Sigma}))\le (J\log(ne/J)+\log(J)\log_+(n\norm{\Sigma}))(1+o(1)),
\end{equation}
by comparison with the integral.
Since $\abs{h^f}\le n\gamma_n$, we obtain that
\[
Pen(h^f)\le 2\norm{\Sigma_{h^f}}
\left( \sum_{j=1}^{\abs{h^f}}(\log(ne/j)+j^{-1}\log_+(n\norm{\Sigma}))\right)
\]
\[
\le 2\norm{\Sigma_{h^f}}\left( n\gamma_n\log(\gamma_n^{-1})+\log(n\gamma_n)\log_+(n\norm{\Sigma})\right)(1+o(1)),
\]
as $n\to\infty$.
On the other hand, we have
\[
\Omega_\delta=4\sqrt{2}\min\big(\tfrac1n,\norm{\Sigma}\big)+\frac{2}{\delta}
\sum_{\lambda:f_\lambda\not=0}\min(\sigma^2_{\lambda},f_\lambda^2).
\]
We use $\sum_{\lambda:f_\lambda\not=0} \sigma^2_{\lambda}\le n\gamma_n\norm{\Sigma_{h^f}}$ which shows
$$
\Omega_\delta \le \frac{4\sqrt{2}}{n}+\frac{2}{\delta}n\gamma_n\norm{\Sigma_{h^f}}.
$$
Choosing $\delta\to 0$ such that $\delta^{-1}=o(\log(\gamma_n^{-1}))$, e.g. $\delta=1/\log\log(\gamma_n^{-1})$, we thus find, as $n\to \infty $,
\begin{equation}
\label{Omega_del}
\frac{2}{\delta}n\gamma_n\norm{\Sigma_{h^f}}= o\Big( \norm{\Sigma_{h^f}}n\gamma_n\log(\gamma_n^{-1})\Big).
\end{equation}
Using Theorem \ref{th:oracle}, Equation (\ref{Omega_del}) we have (\ref{bound1}).
Moreover, using the bounds on $\norm{\Sigma_{h^f}}$ and $\norm{\Sigma}$ we obtain (\ref{bound1b}).
\end{proof}

\subsection{Threshold estimators}

Consider now a  family of threshold estimators. The problem is to study the data-driven selection of the threshold.
Let us consider the case where ${\cal  H}$ denotes the threshold selection rules with arbitrary threshold values $t>0$
\begin{equation}
\label{Htr}
{\cal H}=\{h((X_\lambda)_\lambda)={\bf 1}(\lambda\,:\,\abs{X_\lambda}> \sigma_\lambda t) \,|\,t>0\}.
\end{equation}
Note that $\cal H$ consists of $n=\#\Lambda$ different subset selection rules only and can be implemented efficiently using the order statistics of $(\abs{X_\lambda}/\sigma_\lambda)_\lambda$.

\begin{theorem}
\label{th:tr}
Let $h^\star$ be the data-driven rules defined in (\ref{hstar}) with ${\cal  H}$ as in (\ref{Htr}).
If $\norm{\Sigma_{h_f}}\log(\gamma_n^{-1})\to\infty$, then we have, for $n\to\infty$, uniformly over $f\in {\cal F}_0(\gamma_n)$
\begin{align}
\label{bound2}
\mathbf{E}_f\, \|\hat {f}(h^\star)-f\|^2 \le &
\Big(4n\gamma_n(\norm{\Sigma_{h_f}}\log(\gamma_n^{-1})+8\norm{\Sigma}\gamma_n(\log(\gamma_n^{-1}))^{1/2})\\       &+2\log_+(n\norm{\Sigma})(2\norm{\Sigma_{h_f}}\log(n\gamma_n)
+4\norm{\Sigma} \log_+(n\gamma_n^{2}))\Big)(1+o(1)).
\end{align}
Assuming for $\Sigma$ the growth bounds
\[\norm{\Sigma}=O(\norm{\Sigma_{h_f}}\gamma_n^{-1})\text{ and } \norm{\Sigma}\log_+(n\norm{\Sigma})=o(\norm{\Sigma_{h_f}}n\gamma_n
\log(\gamma_n^{-1})/\log_+(n\gamma_n^2)),
\]
with a second condition always checked if $\log_+(n\gamma_n^2)=0$, this inequality simplifies to
\[ \mathbf{E}_f\, \|\hat {f}(h^\star)-f\|^2\le (4+o(1))\norm{\Sigma_{h^f}} n\gamma_n\log(\gamma_n^{-1}).
\]
\end{theorem}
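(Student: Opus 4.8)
The plan is to apply Theorem~\ref{th:oracle} at a single deterministic threshold, namely $t_n:=\sqrt{2\log(\gamma_n^{-2})}=2\sqrt{\log(\gamma_n^{-1})}$ (an ``FDR-type'' calibration, as the $\log_+(n\gamma_n^{2})$ already appearing in the statement suggests). Writing $h^{(t)}\in{\cal H}$ for the threshold rule at level $t$, the random infimum in Theorem~\ref{th:oracle} is bounded pointwise by its value at $h^{(t_n)}$, so, using $\sum_\lambda(1-h_\lambda)f_\lambda^2-\sum_\lambda h_\lambda(X_\lambda^2-f_\lambda^2)=\sum_\lambda(f_\lambda^2-h_\lambda X_\lambda^2)$,
\[
\mathbf E_f\|\hat f(h^\star)-f\|^2\le(1+\delta)\,\mathbf E_f\Big[\sum_\lambda\big(f_\lambda^2-h^{(t_n)}_\lambda X_\lambda^2\big)+2Pen(h^{(t_n)})\Big]+\Omega_\delta .
\]
I split $\Lambda$ into the support $\Lambda(h_f)=\{f_\lambda\neq0\}$ and its complement, write $S_1,S_0$ for the selected indices inside, resp.\ outside $\Lambda(h_f)$ (``true'' and ``false'' positives), and set $b_j:=\log(ne/j)+j^{-1}\log_+(n\norm\Sigma)\ge0$. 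Two elementary facts are the backbone: for each $\lambda$,
\[
\mathbf E_f\big[f_\lambda^2-h^{(t_n)}_\lambda X_\lambda^2\big]=-\sigma_\lambda^2+\mathbf E\big[X_\lambda^2\mathbf{1}(|X_\lambda|\le\sigma_\lambda t_n)\big]\le\sigma_\lambda^2 t_n^2(1-p_\lambda),\qquad p_\lambda:=\PP(|X_\lambda|>\sigma_\lambda t_n),
\]
with left-hand side $\le0$ when $f_\lambda=0$; and, writing $r_\lambda(h)$ for the rank of $\sigma_\lambda^2$ in $(\sigma^2_{\lambda'})_{\lambda'\in\Lambda(h)}$ and noting $r_\lambda(h^{(t_n)})$ is at least the rank of $\sigma_\lambda^2$ inside $S_i$ when $\lambda\in S_i$,
\[
2Pen(h^{(t_n)})=4\!\!\sum_{\lambda\in\Lambda(h^{(t_n)})}\!\!\sigma^2_\lambda\,b_{r_\lambda(h^{(t_n)})}\le 4\norm{\Sigma_{h_f}}\sum_{j=1}^{|S_1|}b_j+4\norm\Sigma\sum_{j=1}^{|S_0|}b_j ,
\]
since $\sigma^2_\lambda\le\norm{\Sigma_{h_f}}$ for $\lambda\in S_1$.

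The main obstacle — and the point where the constant $4$ rather than $8$ is earned — is the combination of the true-positive bias and penalty. Summing the first fact over $\lambda\in\Lambda(h_f)$, bounding $\sigma^2_\lambda\le\norm{\Sigma_{h_f}}$, using $\sum_{\lambda\in\Lambda(h_f)}p_\lambda=\mathbf E_f|S_1|=:m\le|h_f|\le n\gamma_n$ and Jensen's inequality for the concave map $k\mapsto\sum_{j\le k}b_j$ gives
\[
\mathbf E_f\Big[\sum_{\lambda\in\Lambda(h_f)}\!\!\big(f_\lambda^2-h^{(t_n)}_\lambda X_\lambda^2\big)\Big]+4\norm{\Sigma_{h_f}}\,\mathbf E_f\sum_{j=1}^{|S_1|}b_j\le\norm{\Sigma_{h_f}}\Big(t_n^2(|h_f|-m)+4\textstyle\sum_{j\le m}b_j\Big).
\]
Now $t_n^2=4\log(\gamma_n^{-1})$ is calibrated exactly so that $-t_n^2 m+4\sum_{j\le m}\log(ne/j)\le -t_n^2 m+4m\log(e^2n/m)=4m\big(2+\log(n\gamma_n/m)\big)$ stays $O(n\gamma_n)=o(n\gamma_n\log(\gamma_n^{-1}))$ for all $0\le m\le n\gamma_n$; hence the selection probabilities cancel out and the right-hand side is at most
\[
4\norm{\Sigma_{h_f}}\,n\gamma_n\log(\gamma_n^{-1})\,(1+o(1))+4\norm{\Sigma_{h_f}}\log(n\gamma_n)\log_+(n\norm\Sigma)\,(1+o(1)).
\]
Intuitively: a larger bias forces fewer selected coefficients, hence a smaller penalty, and conversely.

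For the false positives the bias part is $\le0$, and by Jensen $4\norm\Sigma\,\mathbf E_f\sum_{j\le|S_0|}b_j\le 4\norm\Sigma\sum_{j\le N_0}b_j$, where $N_0:=\mathbf E_f|S_0|\le 2n\bar\Phi(t_n)\le 2n\gamma_n^2/(t_n\sqrt{2\pi})$ by Mill's ratio; since also $N_0\ge n\bar\Phi(t_n)$ (at least half the coordinates vanish), $\log(ne/N_0)=2\log(\gamma_n^{-1})(1+o(1))$, so this term is $8\norm\Sigma\,n\gamma_n^2\sqrt{\log(\gamma_n^{-1})}(1+o(1))+4\norm\Sigma\log_+(n\gamma_n^2)\log_+(n\norm\Sigma)(1+o(1))$, i.e.\ at most the two $\norm\Sigma$-terms in \eqref{bound2}. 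Finally $\Omega_\delta\le 4\sqrt2\min(\tfrac1n,\norm\Sigma)+\tfrac2\delta n\gamma_n\norm{\Sigma_{h_f}}$, which is $o\big(n\gamma_n\norm{\Sigma_{h_f}}\log(\gamma_n^{-1})\big)$ once $\delta\to0$ with $\delta^{-1}=o(\log(\gamma_n^{-1}))$ (e.g.\ $\delta=1/\log\log(\gamma_n^{-1})$), using $n\gamma_n\to\infty$ and the hypothesis $\norm{\Sigma_{h_f}}\log(\gamma_n^{-1})\to\infty$; as all the $o(1)$'s depend only on $n$ and $\gamma_n$, summing the three bounds and letting $(1+\delta)\to1$ yields \eqref{bound2} uniformly over ${\cal F}_0(\gamma_n)$. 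The simplification then follows by inserting the growth hypotheses on $\norm\Sigma$: the first turns $\norm\Sigma\,n\gamma_n^2\sqrt{\log(\gamma_n^{-1})}$ into $O(\norm{\Sigma_{h_f}}\,n\gamma_n\sqrt{\log(\gamma_n^{-1})})=o(\norm{\Sigma_{h_f}}\,n\gamma_n\log(\gamma_n^{-1}))$, and the second absorbs the $\log_+(n\norm\Sigma)$ remainders into $o(\norm{\Sigma_{h_f}}\,n\gamma_n\log(\gamma_n^{-1}))$ (the $\norm\Sigma$-remainder being absent when $\log_+(n\gamma_n^2)=0$).
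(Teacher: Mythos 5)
Your proof is correct and reaches the same bound as the paper, but it takes a genuinely different route through the true--positive part, which is the heart of the argument.

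The paper's proof first discards the term $-{\bf 1}(\abs{X_\lambda}>\tau_\lambda)(X_\lambda^2-f_\lambda^2)$ via a pathwise symmetry argument ($g(\xi)+g(-\xi)\ge 0$), then bounds the remaining bias pathwise by $f_\lambda^2\le(\abs{\xi_\lambda}+\tau_\lambda)^2$ on $\{\abs{X_\lambda}\le\tau_\lambda\}$, takes the pointwise maximum with the penalty, and uses the identity $\E[\max(Z,c)]=c+\int_c^\infty P(Z\ge z)\,dz$ to pick up an exponential tail; the threshold $t^0=\sqrt{4\log(e/\gamma_n)}$ is calibrated so that $\tau_\lambda^2\le c_\lambda$ holds on the signal set, allowing the tail to be summed. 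You instead keep the full combination $\sum_\lambda(f_\lambda^2-h_\lambda X_\lambda^2)$ and pass to expectation immediately: the exact identity $\E_f[f_\lambda^2-h_\lambda^{(t_n)}X_\lambda^2]=-\sigma_\lambda^2+\E[X_\lambda^2{\bf 1}(\abs{X_\lambda}\le\sigma_\lambda t_n)]\le\sigma_\lambda^2 t_n^2(1-p_\lambda)$ records the selection probability $p_\lambda$, and then Jensen (for the concave $k\mapsto\sum_{j\le k}b_j$) applied to $\E\abs{S_1}=m=\sum p_\lambda$ makes the $-t_n^2 m$ and $4\sum_{j\le m}\log(ne/j)$ cancel by the choice $t_n^2=4\log(\gamma_n^{-1})$. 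This avoids the max-trick and the exponential correction $2e^{-(C_n-(t_0)^2)/2}$ entirely, and it exposes the FDR-type cancellation directly rather than hiding it inside the calibration condition $\tau_\lambda^2\le c_\lambda$. The false--positive part (Jensen on $\abs{S_0}$ plus Mill's ratio for $N_\tau$) and the $\Omega_\delta$ handling parallel the paper. Both proofs give the same constants; in fact both give constants slightly smaller than those written in \eqref{bound2}, so both prove a marginally stronger statement. One small caution worth noting for completeness: you should spell out that $b_j\ge 0$ and $b_j$ is decreasing for $j\le n$ (true since $j\le n\le ne$), which is what makes $k\mapsto\sum_{j\le k}b_j$ concave and non-negative, justifying both the rank comparison and the Jensen steps.
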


\begin{proof}
Let us now evaluate the right-hand side of the oracle inequality in Theorem \ref{th:oracle} for the threshold selection rules
with arbitrary threshold values $t>0$ defined in (\ref{Htr}).
Given an oracle parameter $t^0>1$ (to be determined below), we set
 $\tau_\lambda:=\sigma_\lambda t^0$. We obtain with $R_\lambda$ denoting the (inverse) rank of the coefficient with index $\lambda$ among $(\sigma_\lambda^2 1(\abs{X_\lambda}>\tau_\lambda))_{\lambda\in\Lambda}$
\begin{align}\label{EqThrRisk}
&\mathbf{E}_f\left[ \inf_{h\in {\cal H}}\Big(\sum_{\lambda\in\Lambda}(1-h_\lambda)f_\lambda^2-\sum_{\lambda\in\Lambda} h_\lambda(X_\lambda^2-f_\lambda^2)+2Pen(h)\Big)\right]\\
&\le \mathbf{E}_f\Big[ \sum_{\lambda\in\Lambda}\Big({\bf 1}(\abs{X_\lambda}\le\tau_\lambda)f_\lambda^2- {\bf 1}(\abs{X_\lambda}>\tau_\lambda) (X_\lambda^2-f_\lambda^2)\\
&\qquad\qquad
+4\sigma_\lambda^2 {\bf 1}(\abs{X_\lambda}>\tau_\lambda)(\log(en/R_\lambda)
+R_\lambda^{-1}\log_+(n\norm{\Sigma}))\Big)\Big].
\end{align}


Let us first show that $\E_f[{\bf 1}(\abs{X_\lambda}>\tau_\lambda) (X_\lambda^2-f_\lambda^2)]$ is always non-negative.
By symmetry $X_\lambda':=f_\lambda-\xi_\lambda$ has the same law as $X_\lambda$.
Defining the function $g(\xi):={\bf 1}(\abs{f_\lambda+\xi}>\tau_\lambda)((f_\lambda+\xi)^2-f_\lambda^2)$,
we check by considering the different cases that $g(\xi)+g(-\xi)\ge 0$ holds. We conclude
\begin{align*}
\E_f[{\bf 1}(\abs{X_\lambda}>\tau_\lambda) (X_\lambda^2-f_\lambda^2)] &=
\tfrac12
\E_f[g(\xi_\lambda)+g(-\xi_\lambda)]\ge 0.
\end{align*}
Hence, the term with a minus sign in \eqref{EqThrRisk} can be discarded for an upper bound.

Let us now  consider the coefficients that contain a signal part (i.e. with $f_\lambda\not=0$).
The following inequality will be helpful to obtain a bound independent of the size of $\abs{f_\lambda}$.
Let us denote by $r_\lambda^f$ the corresponding inverse rank within  $(\sigma_\lambda^2 {\bf 1}(f_\lambda\not=0))_{\lambda\in\Lambda}$.
With $f_\lambda^2\le (\abs{\xi_\lambda}+\tau_\lambda)^2$ on the event $\{\abs{X_\lambda}\le\tau_\lambda\}$ we obtain
\begin{align}\label{sig}
&\sum_{\lambda\in\Lambda,f_\lambda\not=0}\Big({\bf 1}(\abs{X_\lambda}\le\tau_\lambda)f_\lambda^2
+4\sigma_\lambda^2 {\bf 1}(\abs{X_\lambda}>\tau_\lambda)(\log(en/R_\lambda)+R_\lambda^{-1}\log_+(n\norm{\Sigma}))\Big)\\
&\le \sum_{\lambda\in\Lambda,f_\lambda\not=0}\max\Big((\abs{\xi_\lambda}+\tau_\lambda)^2,4\sigma_\lambda^2 (\log(en/R_\lambda)+R_\lambda^{-1}\log_+(n\norm{\Sigma}))\Big)\\
&\le \sum_{\lambda\in\Lambda,f_\lambda\not=0}\max\Big((\abs{\xi_\lambda}+\tau_\lambda)^2,4\sigma_\lambda^2 (\log(en/r_\lambda^f)+(r_\lambda^f)^{-1}\log_+(n\norm{\Sigma}))\Big),
\end{align}
where for the last inequality we have used that for $n\gamma_n$ distinct values $R_\lambda\in\N$ the expression is maximal in the case $R_\lambda=r_\lambda^f$.

The general identity ${\bf E}[\max(Z,c)]=c+\int_c^\infty P(Z\ge z)dz$ applied to $Z=(\abs{\xi_\lambda}+\tau_\lambda)^2$ and deterministic $c_\lambda\ge\tau_\lambda^2$ yields
\begin{align}\label{Emax}
{\bf E}[\max((\abs{\xi_\lambda}+\tau_\lambda)^2,c_\lambda)]&\le c_\lambda+\int_{c_\lambda}^\infty P(\abs{\xi_\lambda}\ge\sqrt{z}-\tau_\lambda)\,dz
\le c_\lambda+2e^{-(\sqrt{c_\lambda}-\tau_\lambda)^2/(2\sigma_\lambda^2)}.
\end{align}


In order to ensure $\tau_\lambda^2\le c_\lambda:=4\sigma_\lambda^2 (\log(en/r_\lambda^f)+(r_\lambda^f)^{-1}\log_+(n\norm{\Sigma}))$ whenever $f_\lambda\not=0$, we are lead to choose
\begin{equation}\label{Eqt0restr}
t^0= \sqrt{4\log(e/\gamma_n)}.
\end{equation}
In the sequel we bound $\sigma_\lambda^2$ simply by $\norm{\Sigma_{h_f}}$ in the case $f_\lambda\not=0$.
Then using again the bound on sums of logarithms (\ref{sumlog}) and $\#\{f_\lambda\not=0\}\le n\gamma_n$ as well as the concavity of $e^{-x}$ for bounding the sum of exponentials,
we obtain  that \eqref{EqThrRisk} over the signal part satisfies
\begin{align}
&\E_f\Big[\sum_{\lambda\in\Lambda,f_\lambda\not=0}\Big({\bf 1}(\abs{X_\lambda}\le\tau_\lambda)f_\lambda^2
+4\sigma_\lambda^2 {\bf 1}(\abs{X_\lambda}>\tau_\lambda)(\log(en/R_\lambda)+R_\lambda^{-1}\log_+(n\norm{\Sigma}))\Big)\Big]\\
&\le \sum_{\lambda\in\Lambda,f_\lambda\not=0}(c_\lambda+2e^{-(\sqrt{c_\lambda}-\tau_\lambda)^2/(2\sigma_\lambda^2)})
\le n\gamma_n(C_n\norm{\Sigma_{h_f}}+2 e^{-(C_n-(t^0)^2)/2}),\,
\end{align}
where
\begin{equation}\label{C_n}
C_n=(4+o(1))(\log(\gamma_n^{-1})+\log_+(n\norm{\Sigma})\log(n\gamma_n)/(n\gamma_n)).
\end{equation}

Owing to $C_n\norm{\Sigma_{h_f}}\to\infty$ we even have
\begin{align}
&\E_f\Big[\sum_{\lambda\in\Lambda,f_\lambda\not=0}\Big({\bf 1}(\abs{X_\lambda}\le\tau_\lambda)f_\lambda^2
+4\sigma_\lambda^2 {\bf 1}(\abs{X_\lambda}>\tau_\lambda)(\log(en/R_\lambda)+R_\lambda^{-1}\log_+(n\norm{\Sigma}))\Big)\Big]
\nonumber\\
&\le \norm{\Sigma_{h_f}}n\gamma_n C_n(1+o(1)).\label{sig2}
\end{align}

On the other hand, for the non-signal part $f_\lambda=0$, we introduce $N_\tau:=\sum_{\lambda\in\Lambda}{\bf 1}(\abs{\xi_\lambda}>\tau_\lambda)$
and we use the large deviation bound:
\[
{\bf E}[N_\tau]=n P(\abs{\xi_\lambda}>\tau_\lambda)\le 2n(t^0)^{-1}e^{-(t^0)^2/2}.
\]
Again by considering worst case permutations instead of the ranks, using (\ref{sumlog}) and by Jensen's inequality for the concave functions $\log(x),x\log(en/x)$ we infer:
\begin{align}\label{Eq42}
&\mathbf{E}_f\left[\sum_{\lambda:f_\lambda=0}\Big({\bf 1}(\abs{X_\lambda}\le\tau_\lambda)f_\lambda^2+4\sigma_\lambda^2 {\bf 1}(\abs{X_\lambda}>\tau_\lambda)(\log(en/R_\lambda)+R_\lambda^{-1}\log_+(n\norm{\Sigma}))\Big)\right]\\
&\le 4\norm{\Sigma}\mathbf{E}_f\left[\sum_{\lambda\in\Lambda} {\bf 1}(\abs{\xi_\lambda}>\tau_\lambda)(\log(en/R_\lambda)+R_\lambda^{-1}\log_+(n\norm{\Sigma}))\right]\\
&\le 4\norm{\Sigma}\mathbf{E}\left[\sum_{j=1}^{N_\tau}(\log(en/j)+j^{-1}\log_+(n\norm{\Sigma}))\right]\\
&\le 4\norm{\Sigma}\mathbf{E}\left[(N_\tau\log(en/N_\tau)+\log(N_\tau)\log_+(n\norm{\Sigma}))\right] (1+o(1))\\
&\le 4\norm{\Sigma} (2n (t^0)^{-1} e^{-(t^0)^2/2}(1+t_0^2/2)+(\log n-(t^0)^2/2)\log_+(n\norm{\Sigma}))(1+o(1))\\
&\le  2\norm{\Sigma}(2n e^{-(t^0)^2/2}t^0+(2\log n-(t^0)^2)\log_+(n\norm{\Sigma}))(1+o(1)).
\end{align}
For the $t^0$ chosen, the total bound over \eqref{EqThrRisk} is thus, by (\ref{sig2}), (\ref{Eq42}) and by definition of $C_n$ in (\ref{C_n}),
\begin{align}
& n\gamma_n(1+o(1))\Big(\norm{\Sigma_{h_f}}C_n+2\norm{\Sigma} (2e^{-(t^0)^2/2}t^0+(2\log n-(t^0)^2)\log_+(n\norm{\Sigma})/(n\gamma_n))\Big)\\
&=n\gamma_n(1+o(1))\Big(4\norm{\Sigma_{h_f}}(\log(\gamma_n^{-1})+\log_+(n\norm{\Sigma})\log(n\gamma_n)/(n\gamma_n))\\
&\quad+2\norm{\Sigma} (4\gamma_n\sqrt{\log(\gamma_n^{-1})}+2\log(n\gamma_n^{2})\log_+(n\norm{\Sigma})/(n\gamma_n))\Big).
\end{align}
This yields the asserted general bound and inserting the bound for $\log_+(n\norm{\Sigma})$ gives directly the second bound.
\end{proof}

\subsection{Discussion}
{\it Heterogeneous case.}
One may compare the method and its accuracy with other results in related frameworks. For example, \cite{R} considers a very close framework
of model selection in inverse problems by using the SVD approach. This results in a noise $(\xi_\lambda)$ which is heterogeneous and diagonal.
\cite{J,JP} study the related topic of inverse problems and Wavelet Vaguelette Decomposition (WVD), built on \cite{BM}.
The framework in \cite{J} is more general than ours. However, this leads to less precise results. In all their results \cite{JP,R}, there exist
universal constants which are not really controlled.
This is even more important for the constants inside the method, for example in the penalty.
Our method contains an explicit penalty. It is used in the mathematical results and also in simulations without additional tuning.
 A possible extension of our method to the dependent WVD case does not seem straight-forward.

\smallskip \noindent
{\it Homogeneous case.}
Let us compare with other work for the homogeneous setting $\Sigma = \sigma^2Id$.
There exist a lot of results in this framework, see e.g. \cite{ABDJ,J,M,WZ}.
Again those results contain universal constants, not only in the mathematical results, but even inside the methods.
For example, constants in front of the penalty, but also inside the FDR technique, with an hyper-parameter $q_n$ which has to be tuned.

The perhaps closest paper to our work is \cite{G11} in the homogeneous case. Our penalty is analogous to ``twice the optimal'' penalty considered in \cite{G11}.
This is due to difficulties in the heterogenous case, where the stochastic process that one needs to control is much more involved
in this setting. Indeed, there is no more symmetry inside this stochastic process, since each empirical coefficient has its own variance.
The problem and the penalty do not only depend on the number of coefficients that one selects, but also on their position.

This leads to a result $4\norm{\Sigma}n\gamma_n\log(\gamma_n^{-1})$, where one gets a constant $2\sigma^2n\gamma_n\log(\gamma_n^{-1})$ in \cite{G11}.
The potential loss of the factor 2 in the heterogeneous framework might possibly be avoidable in theory, but in simulations the results seem comparably less sensitive to this factor than to other modifications, e.g. to how many data points, among the $n\gamma_n$ non-zero coefficients, are close to the critical threshold level, which defines some kind of effective sparsity of the problem (often muss less than $n\gamma_n$). This effect is not treated in the theoretical setup in all of the FDR-related studies, where implicitly a worst case scenario of the coefficients' magnitude is understood.

\section{Minimax lower bound}
\label{sec:low}

\begin{theorem}
\label{th:lower}
For any estimator $\hat f_n$ based on $n$ observations we have the minimax lower bound
\[ \sup_{f\in{\cal F}_0(\gamma_n)}{\bf E}_f[\norm{\hat{f}_n-f}^2]
\ge \sup_{\alpha_n\in S_\Lambda(n\gamma_n,c_n)} 2\big(1+o(1)\big)\Big(\sum_{\lambda\in\Lambda} \sigma_\lambda ^2\alpha_{\lambda,n}\log(\alpha_{\lambda,n}^{-1})\Big)
\]
for some $c_n\to 0$ where $S_\Lambda(R,c)=\{\alpha\in[0,c]^\Lambda\,|\,\sum_\lambda\alpha_\lambda\le R(1-c)\}$ denotes the intersection of $c$-times the $n$-dimensional
unit cube with $R(1-c)$-times the $n$-simplex and where $o(1)\to 0$ as $n\to\infty$.

Distributing mass uniformly over the $r_n$ indices with largest values $\sigma_\lambda$  yields the lower bound, as $n\to\infty$,
\[ \sup_{f\in{\cal F}_0(\gamma_n)}{\bf E}_f[\norm{\hat{f}_n-f}^2]
\ge 2n\gamma_n\log(\gamma_n^{-1})\big(1+o(1)\big)\frac 1{r_n}\sum_{i=1}^{r_n} \sigma_{(i)} ^2
\]
in terms of the inverse order statistics $\sigma_{(i)}^2$, provided $\log(n/r_n)=o(\log(\gamma_n^{-1}))$ (i.e., $r_n$ must be somewhat larger than $n\gamma_n$).

Note that for polynomial growth $\sigma_{(i)}^2\sim (n-i)^\beta$, $\beta>0$,
the lower bound is, as $n\to\infty$,
\[ \sup_{f\in{\cal F}_0(\gamma_n)}{\bf E}_f[\norm{\hat{f}_n-f}^2]
\ge 2\big(1+o(1)\big)\norm{\Sigma}n\gamma_n\log(\gamma_n^{-1}).\]

\end{theorem}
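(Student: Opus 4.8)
The plan is to establish the lower bound via a standard Bayesian reduction (the Assouad/Fano-type bound for sparse sequence spaces), then optimize the prior, and finally specialize the prior and compute the resulting constant under polynomial growth. First I would set up the following prior: pick any $\alpha_n\in S_\Lambda(n\gamma_n,c_n)$, and for each $\lambda$ independently set $f_\lambda$ equal to $\pm\mu_\lambda$ with probability $\alpha_{\lambda,n}/2$ each and $0$ otherwise, where the spike heights are calibrated at the detection boundary $\mu_\lambda^2 = 2\sigma_\lambda^2\log(\alpha_{\lambda,n}^{-1})$. Since $\E[\#\{\lambda:f_\lambda\neq 0\}] = \sum_\lambda\alpha_{\lambda,n}\le n\gamma_n(1-c_n)$, a concentration argument (Bernstein/Chernoff, using $c_n\to 0$ so the number of nonzero coordinates fluctuates by a lower-order term) shows that with probability tending to one the draw lies in ${\cal F}_0(\gamma_n)$; one restricts the prior to this event at the cost of a $(1+o(1))$ factor. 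Because the noise is diagonal and the prior is a product, the Bayes risk decouples coordinatewise, and for each $\lambda$ the two-point (or three-point $\{-\mu_\lambda,0,\mu_\lambda\}$) testing bound gives a per-coordinate risk of $(1+o(1))\,\alpha_{\lambda,n}\mu_\lambda^2 = 2(1+o(1))\sigma_\lambda^2\alpha_{\lambda,n}\log(\alpha_{\lambda,n}^{-1})$, precisely because at the boundary height the posterior cannot reliably distinguish a spike from noise. Summing over $\lambda$ and taking the supremum over $\alpha_n\in S_\Lambda(n\gamma_n,c_n)$ yields the first displayed inequality.

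For the second bound I would simply choose a particular $\alpha_n$: put equal mass $\alpha_{\lambda,n} = \gamma_n$ (equivalently $n\gamma_n/r_n$ after rescaling — one must be careful that $\sum_\lambda\alpha_{\lambda,n}\le n\gamma_n(1-c_n)$, so one uses $\alpha_{\lambda,n}=(1-c_n)n\gamma_n/r_n$) on the $r_n$ indices $\lambda$ with the largest $\sigma_\lambda$, and $0$ elsewhere. Then $\sum_\lambda\sigma_\lambda^2\alpha_{\lambda,n}\log(\alpha_{\lambda,n}^{-1}) = (1-c_n)\frac{n\gamma_n}{r_n}\big(\sum_{i=1}^{r_n}\sigma_{(i)}^2\big)\log\big(\tfrac{r_n}{(1-c_n)n\gamma_n}\big)$, and since $\log(r_n/(n\gamma_n)) = \log(n/(n\gamma_n)) - \log(n/r_n) = \log(\gamma_n^{-1})(1-o(1))$ under the hypothesis $\log(n/r_n)=o(\log(\gamma_n^{-1}))$, the log factor is $\log(\gamma_n^{-1})(1+o(1))$; this gives the second display. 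The third bound is then a direct computation: for $\sigma_{(i)}^2\sim(n-i)^\beta$ one has $\frac1{r_n}\sum_{i=1}^{r_n}\sigma_{(i)}^2 = \frac1{r_n}\sum_{i=1}^{r_n}(n-i)^\beta$; choosing $r_n$ growing slowly (e.g. $r_n = n\gamma_n\log\log(\gamma_n^{-1})$, which is still $o(n)$ and satisfies $\log(n/r_n)=o(\log(\gamma_n^{-1}))$) the indices $i\le r_n$ all satisfy $n-i = n(1+o(1))$, so the Cesàro average is $n^\beta(1+o(1)) = \norm{\Sigma}(1+o(1))$, yielding $2(1+o(1))\norm{\Sigma}n\gamma_n\log(\gamma_n^{-1})$.

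The main obstacle is the per-coordinate testing lower bound delivering the \emph{sharp constant} $2$ rather than merely a constant: one must verify that at spike height $\mu_\lambda^2 = 2\sigma_\lambda^2\log(\alpha_{\lambda,n}^{-1})$ (and only then, exploiting $\alpha_{\lambda,n}\to 0$), the Bayes risk for the sparse prior with nonzero probability $\alpha_{\lambda,n}$ is asymptotically $\alpha_{\lambda,n}\mu_\lambda^2(1+o(1))$ — essentially that the optimal procedure is forced to either always estimate $0$ (incurring the full $\alpha_{\lambda,n}\mu_\lambda^2$) or incur a comparable error from false positives, with the crossover at exactly this height. This is the classical sparse-signal detection-boundary calculation (as in the minimax theory underlying \cite{ABDJ}); the heterogeneous twist is only that the height is scaled by $\sigma_\lambda$, which is harmless since the problem rescales coordinatewise to the unit-variance case. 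A secondary technical point is the concentration step ensuring the random support size stays below $n\gamma_n$ with high probability and controlling the conditioning; this is routine given $c_n\to 0$ but must be stated so that the $(1+o(1))$ is not lost.
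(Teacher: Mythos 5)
Your overall architecture matches the paper's: a coordinatewise sparse Bayesian prior with spike height tied to the per-coordinate variance, a concentration argument to land inside ${\cal F}_0(\gamma_n)$, a per-coordinate Bayes-risk computation, and then specialization of the weights $\alpha_{\lambda,n}$ for the second and third displays. The treatment of the second and third bounds (uniform mass on the top $r_n$ indices, then $\sigma_{(i)}^2\sim(n-i)^\beta$ with $r_n=o(n)$ so the Ces\`aro average equals $\norm{\Sigma}(1+o(1))$) is correct and essentially identical to the paper.

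The genuine gap is in the key per-coordinate step. You place the spike \emph{exactly at} the detection boundary, $\mu_\lambda^2 = 2\sigma_\lambda^2\log(\alpha_{\lambda,n}^{-1})$, and claim the Bayes risk is $(1+o(1))\,\alpha_{\lambda,n}\mu_\lambda^2$. This is false: at exactly this height the posterior \emph{can} detect the spike about half the time. Concretely, in the one-sided version, conditional on $f_\lambda=\mu_\lambda$ write $X_\lambda=\mu_\lambda+\sigma_\lambda Z$ with $Z\sim{\cal N}(0,1)$; the posterior odds of $\{f_\lambda=\mu_\lambda\}$ equal $\frac{\alpha}{1-\alpha}\exp\big(\mu_\lambda^2/(2\sigma_\lambda^2)+\mu_\lambda Z/\sigma_\lambda\big)$, and with $\mu_\lambda^2/(2\sigma_\lambda^2)=\log(\alpha^{-1})$ the prefactor cancels, leaving $\exp(\mu_\lambda Z/\sigma_\lambda)$, which tends to $\infty$ on $\{Z>0\}$ and to $0$ on $\{Z<0\}$ since $\mu_\lambda/\sigma_\lambda\to\infty$. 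So the Bayes estimator recovers $\mu_\lambda$ when $Z>0$ and misses when $Z<0$, giving conditional risk $\approx \mu_\lambda^2{\bf 1}(Z<0)$ and hence per-coordinate Bayes risk $\approx\tfrac12\alpha_{\lambda,n}\mu_\lambda^2 = \sigma_\lambda^2\alpha_{\lambda,n}\log(\alpha_{\lambda,n}^{-1})$, i.e.\ you lose exactly a factor $2$ and would prove the constant $1$ rather than $2$. (The two-sided $\pm\mu_\lambda$ version of your prior does not cure this — the $\pm$ ambiguity is resolved with overwhelming probability — it only shifts the effective mixing weight to $\alpha/2$ per sign, leaving the same factor-$2$ loss.)

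The fix, which is what the paper does, is to shrink the spike \emph{slightly below} the boundary: take $\mu_{\lambda,n}=\sigma_\lambda\sqrt{2(1-\eps_n)\log(\alpha_{\lambda,n}^{-1})}$ with $\eps_n\to 0$ slowly enough that $\alpha_{\lambda,n}^{-\eps_n}\to\infty$ uniformly (the paper takes $\eps_n=(\log c_n^{-1})^{-1/2}$, so that $\alpha_{\lambda,n}\le c_n$ forces $\alpha_{\lambda,n}^{-\eps_n}\ge e^{\sqrt{\log c_n^{-1}}}\to\infty$). Then the posterior odds conditional on $f_\lambda=\mu_{\lambda,n}$ are $\exp(\mu Z/\sigma)\cdot\alpha_{\lambda,n}^{-\eps_n}\to 0$ for all but an asymptotically negligible set of $Z$, so the Bayes estimator effectively never detects, the per-coordinate Bayes risk is $(1-o(1))\,\alpha_{\lambda,n}\mu_{\lambda,n}^2 = 2(1-o(1))\sigma_\lambda^2\alpha_{\lambda,n}\log(\alpha_{\lambda,n}^{-1})$, and the sharp constant $2$ follows. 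The paper makes this rigorous via a closed-form posterior-variance identity and a single application of Jensen's inequality, which gives exactly the bound you need without having to argue about crossover events. So: same route, but the precise calibration of $\mu_{\lambda,n}$ is the essential step your plan is missing, and as written your claim about the Bayes risk at the exact boundary would not survive.
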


\begin{remark}
The lower bound is a kind of weighted entropy.
In contrast to the upper bounds above the minimax (and the Bayes) lower bound does not involve the quantity $\norm{\Sigma_{h_f}}$, individual to each unknown $f$.
In the proof for this heterogeneous model, conceptually we need to allow for a high complexity of the class ${\cal F}_0(\gamma_n)$,
leading to the entropy factor $\log(\gamma_n^{-1})$, and to put more prior probability on coefficients with larger variance, which explains the abstract weighted entropy expression.
\end{remark}

\begin{proof}

Consider for each coefficient $f_\lambda $ the following Bayesian
prior, which turns out to be asymptotically least favorable:
\[ \pi_\lambda =(1-\alpha_{\lambda ,n})\delta_0+\alpha_{\lambda ,n}\delta_{\mu_{\lambda ,n}},\quad
\lambda\in\Lambda,
\]
with some $\mu_{\lambda,n }\ge 0$. Without loss of generality we may assume $c_n\downarrow 0$ so slowly that $c_n\sqrt{n\gamma_n}\to\infty$.
Introducing the number of non-zero entries $N:=\sum_\lambda {\bf 1}(f_\lambda \not=0)$ and writing $P$ for the joint law of prior and observations, we deduce by Chebyshev inequality
\[ P(f
\notin {\cal F}_0(\gamma_n))=P(N> n\gamma_n)=P(N-n\gamma_n(1-c_n)>n\gamma_nc_n)
\le \frac{\Var(N)}{(c_nn\gamma_n)^2}\le\frac{n\gamma_n}{(c_nn\gamma_n)^2}\to 0.
\]
The property $P(f\in {\cal F}_0(\gamma_n))\to 1$ then implies that the Bayes-optimal risk, derived below, will be an asymptotic minimax lower bound over ${\cal F}_0(\gamma_n)$.

We need to calculate the Bayes risk and find the posterior law of $f_\lambda \in\{0,\mu_{\lambda ,n}\}$ for each coordinate $\lambda $:
\[ P(f_\lambda =\mu_{\lambda ,n}|X_\lambda =x)=\frac{\alpha_{\lambda ,n}\phi_{\mu_{\lambda ,n},\sigma_\lambda ^2}(x)}
{(1-\alpha_{\lambda ,n})\phi_{0,\sigma_\lambda ^2}(x)+\alpha_{\lambda ,n}\phi_{\mu_{\lambda ,n},\sigma_\lambda ^2}(x)}.
\]
Since we deal with quadratic loss, the Bayes estimator $\hat f_\lambda $ equals the conditional expectation $\E[f_\lambda |X_\lambda ]$
 and the Bayes risk the expectation of the conditional variance, which is calculated as
\begin{equation}
 \E[\Var(f_\lambda \,|\,X_\lambda )]=\E[f_\lambda ^2]-\E[\E[f_\lambda |X_\lambda ]^2]
=\mu_{\lambda ,n}^2\Big(\alpha_{\lambda ,n}-\int \frac{\alpha_{\lambda ,n}^2\phi_{\mu_{\lambda ,n},\sigma_\lambda ^2}(x)^2}
{(1-\alpha_{\lambda ,n})\phi_{0,\sigma_\lambda ^2}(x)+ \alpha_{\lambda ,n}\phi_{\mu_{\lambda ,n},\sigma_\lambda ^2}(x)}\,dx\Big).
\end{equation}
The integral can be transformed into an expectation with respect to $Z\sim {\cal N}(0,1)$ and bounded by Jensen's inequality:
\begin{align*}
&\int \frac{\alpha_{\lambda ,n}^2\phi_{\mu_{\lambda ,n},\sigma_\lambda ^2}(x)^2} {(1-\alpha_{\lambda ,n})\phi_{0,\sigma_\lambda ^2}(x)+ \alpha_{\lambda ,n}\phi_{\mu_{\lambda ,n},\sigma_\lambda ^2}(x)}\,dx\\
&\qquad =\alpha_{\lambda ,n}\E\Big[\Big(1+\alpha_{\lambda ,n}^{-1}(1-\alpha_{\lambda ,n})\exp(\sigma_\lambda ^{-1}Z-\mu_{\lambda ,n}^2/(2\sigma_\lambda ^2))\Big)^{-1} \Big]\\
&\qquad \le \alpha_{\lambda ,n} \Big(1+\alpha_{\lambda ,n}^{-1}(1-\alpha_{\lambda ,n})\E[\exp(\sigma_\lambda ^{-1}Z-\mu_{\lambda ,n}^2/(2\sigma_\lambda ^2))]\Big)^{-1} \\
&\qquad =\alpha_{\lambda ,n}
\Big(1+\alpha_{\lambda ,n}^{-1}(1-\alpha_{\lambda ,n})\exp((1-\mu_{\lambda ,n}^2)/(2\sigma_\lambda ^2))\Big)^{-1}.
\end{align*}

Since $\alpha_{\lambda,n}\to 0$ uniformly, we just select
\[\mu_{\lambda ,n}=\sigma_\lambda \sqrt{2(1-(\log c_n^{-1})^{-1/2})\log(\alpha_{\lambda ,n}^{-1})}\]
such that
\[\E[\Var(f_\lambda \,|\,X_\lambda )]\ge 2\sigma_\lambda ^2\alpha_{\lambda ,n}(1-(\log c_n^{-1})^{-1/2})\log(\alpha_{\lambda ,n}^{-1})
(1-((1+(1-\alpha_{\lambda ,n})\alpha_{\lambda ,n}^{-(\log c_n^{-1})^{-1/2}}e^{1/(2\sigma_\lambda ^2)}))^{-1}).\]
Noting $\alpha_{\lambda ,n}^{-(\log c_n^{-1})^{-1/2}}\to \infty$ uniformly over $\lambda$, the overall Bayes risk is hence uniformly lower bounded by
\[ 2\big(1+o(1)\big)\Big(\sum_{\lambda\in\Lambda} \sigma_\lambda ^2\alpha_{\lambda ,n}\log(\alpha_{\lambda ,n}^{-1})\Big).
\]
 The supremum at $n$ is attained for
\[ \alpha_{\lambda ,n}=\exp\Big(\frac{\bar\sigma_n^2}{\sigma_\lambda ^2}\log(e\gamma_n(1-c_n))-1\Big)=e^{-1}(e\gamma_n(1-c_n))^{\bar\sigma_n^2/\sigma_\lambda^2},
\]
where $\bar\sigma_n>0$ is  such that $\sum_\lambda \alpha_{\lambda ,n}=n\gamma_n(1-c_n)$ holds, provided $\alpha_{\lambda ,n}\le c_n$ for all $\lambda$.
The latter condition is fulfilled if $\bar\sigma_n^2\gtrsim\max_\lambda \sigma_\lambda^2$.

Alternatively, we may write $\alpha_{\lambda,n}=n\gamma_n(1-c_n)w_{\lambda,n}$ and the entropy expression becomes
\[ 2\big(1+o(1)\big)n\gamma_n\sup_{w_{\lambda,n}} \Big(\sum_{\lambda\in\Lambda} \sigma_\lambda ^2 w_{\lambda,n}\big(\log(w_{n,\lambda}^{-1})-\log(n\gamma_n)\big)\Big)
\]
where the $w_{\lambda,n}\in[0,(n\gamma_n(1-c_n))^{-1}]$ sum up to one: $\sum_\lambda w_{\lambda,n}=1$. From this representation we immediately infer the lower bound
\[ 2n\gamma_n\log(\gamma_n^{-1})\big(1+o(1)\big)\frac 1n\sum_{\lambda\in\Lambda} \sigma_\lambda ^2 \]
using the uniform weights $w_{\lambda ,n}=1/n$.

Note that for polynomial growth $\sigma_{(i)}^2\sim (n-i)^\beta$, $\beta>0$, and for $r_n=o(n)$, we  have $\sigma_{(r_n)}^2/\sigma_{(1)}^2\to 1$ and the lower bound is indeed
\[ \sup_{f\in{\cal F}_0(\gamma_n)}{\bf E}_f[\norm{\hat{f}_n-f}^2]
\ge 2\big(1+o(1)\big)\norm{\Sigma}n\gamma_n\log(\gamma_n^{-1}).\]
\end{proof}

\section{Minimax upper bound}

\label{sec:upp}

Consider now the setting where the sparsity $\gamma_n$ is known and a correctly tuned threshold estimator is applied in order to identify the unknown positions
of the significant non-zero coefficients $f_\lambda$.

\begin{theorem}
\label{th:upper}
Consider the threshold estimator defined coordinate-wise by
\[ \hat f_\lambda=X_\lambda{\bf 1}_{\{X_\lambda^2>2 \sigma_\lambda^2\log(\alpha_{\lambda,n}^{-1})\}}\text{
with }\alpha_{\lambda,n}:=e^{-\beta_n/\sigma_\lambda^2}
\]
and $\beta_n>0$ chosen such that $\sum_{\lambda \in\Lambda} \alpha_{\lambda,n}=n\gamma_n$. Then, as $n\to\infty$,
\[\sup_{f\in{\cal F}_0(\gamma_n)}{\bf E}_f[\norm{\hat{f}_n-f}^2]\le 2n\gamma_n\beta_n(1+o(1))
\]
holds. This implies that, as $n\to\infty$,
\[
\sup_{f\in{\cal F}_0(\gamma_n)}{\bf E}_f[\norm{\hat{f}_n-f}^2]\le 2n\gamma_n\log(\gamma_n^{-1})\norm{\Sigma}(1+o(1)),
\]
which is minimax optimal for at most polynomial growth in $(\sigma_\lambda^2)$ by the lower bound in Theorem \ref{th:lower}.
\end{theorem}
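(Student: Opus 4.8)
The plan is to split the risk into the contribution of the zero coefficients (false positives from pure noise) and that of the non-zero coefficients (which either pass or fail the threshold), and to bound each piece by a multiple of $\alpha_{\lambda,n}$. First I would write, coordinate-wise,
\[
{\bf E}_f[(\hat f_\lambda-f_\lambda)^2]
={\bf E}_f\big[(X_\lambda-f_\lambda)^2{\bf 1}_{\{X_\lambda^2>2\sigma_\lambda^2\log(\alpha_{\lambda,n}^{-1})\}}\big]
+f_\lambda^2\,P_f\big(X_\lambda^2\le 2\sigma_\lambda^2\log(\alpha_{\lambda,n}^{-1})\big).
\]
For $f_\lambda=0$ the second term vanishes and the first is $\sigma_\lambda^2\,{\bf E}[\zeta^2{\bf 1}_{\{\zeta^2>2\log(\alpha_{\lambda,n}^{-1})\}}]$ with $\zeta\sim{\cal N}(0,1)$; the standard Gaussian tail estimate (Mill's ratio, as already used in the proof of Lemma \ref{th:hull}) gives ${\bf E}[\zeta^2{\bf 1}_{\{\zeta^2>2t\}}]\le (2t+1)\sqrt{2/(\pi t)}\,e^{-t}\big(1+o(1)\big)$, which with $t=\log(\alpha_{\lambda,n}^{-1})\to\infty$ uniformly is $O(\log(\alpha_{\lambda,n}^{-1})\alpha_{\lambda,n})=o(\beta_n/\sigma_\lambda^2\cdot\alpha_{\lambda,n})$ after inserting $\log(\alpha_{\lambda,n}^{-1})=\beta_n/\sigma_\lambda^2$. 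Summing over the (at most $n$) zero indices and using $\sum_\lambda\alpha_{\lambda,n}=n\gamma_n$ shows this whole block is $o(n\gamma_n\beta_n)$, provided $\beta_n/\sigma_\lambda^2\to\infty$ uniformly — which holds because $\sum_\lambda e^{-\beta_n/\sigma_\lambda^2}=n\gamma_n=o(n)$.

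Next I would treat the non-zero coefficients, of which there are at most $n\gamma_n$. Here the key is the classical bound for a thresholded Gaussian: for the threshold $\tau_\lambda^2=2\sigma_\lambda^2\log(\alpha_{\lambda,n}^{-1})$ one has, uniformly in the unknown magnitude $|f_\lambda|$,
\[
{\bf E}_f[(\hat f_\lambda-f_\lambda)^2]\le \big(2\log(\alpha_{\lambda,n}^{-1})+1\big)\sigma_\lambda^2\big(1+o(1)\big)
=\big(2\beta_n+o(\beta_n/\sigma_\lambda^2\cdot\sigma_\lambda^2)\big)\big(1+o(1)\big),
\]
using $\sigma_\lambda^2\log(\alpha_{\lambda,n}^{-1})=\beta_n$. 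This is the familiar "oracle $\min(f_\lambda^2,\tau_\lambda^2)$ up to a $1$" inequality: when $f_\lambda^2$ is large the risk is essentially $\sigma_\lambda^2$ plus the tail of a false negative, and when $f_\lambda^2$ is small it is at most $\tau_\lambda^2+\sigma_\lambda^2$; in both regimes the bound $2\beta_n+\sigma_\lambda^2$ dominates, and the additive $\sigma_\lambda^2=o(\beta_n)$ since $\beta_n/\sigma_\lambda^2\to\infty$. Summing over the at most $n\gamma_n$ non-zero indices gives $2n\gamma_n\beta_n(1+o(1))$, which combined with the $o(n\gamma_n\beta_n)$ from the zero part is the first claimed bound.

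For the second assertion I would observe that $\beta_n$ is defined implicitly by $\sum_\lambda e^{-\beta_n/\sigma_\lambda^2}=n\gamma_n$; bounding each summand by $e^{-\beta_n/\norm{\Sigma}}$ gives $ne^{-\beta_n/\norm{\Sigma}}\ge n\gamma_n$, i.e. $\beta_n\le\norm{\Sigma}\log(\gamma_n^{-1})$, whence $2n\gamma_n\beta_n(1+o(1))\le 2n\gamma_n\log(\gamma_n^{-1})\norm{\Sigma}(1+o(1))$. Matching this with the polynomial-growth case of Theorem \ref{th:lower}, where the lower bound is $2(1+o(1))\norm{\Sigma}n\gamma_n\log(\gamma_n^{-1})$, yields minimax optimality. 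The main obstacle is the uniformity in $|f_\lambda|$ of the thresholded-coordinate risk bound together with the heterogeneity of the $\sigma_\lambda$: one must verify that the error terms in the single-coordinate estimates are $o(\beta_n/\sigma_\lambda^2)$ \emph{uniformly} over $\lambda$, which reduces to the quantitative statement $\min_\lambda\beta_n/\sigma_\lambda^2\to\infty$; this in turn follows from $n\gamma_n=o(n)$ since otherwise $\sum_\lambda e^{-\beta_n/\sigma_\lambda^2}$ could not be as small as $n\gamma_n$. Everything else is a routine Gaussian tail computation analogous to those already carried out in the proofs of Lemmas and Theorems above.
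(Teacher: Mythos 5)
Your proposal follows essentially the same route as the paper's proof: split the coordinate-wise risk into the contribution from coordinates with $f_\lambda\not=0$ (which is bounded uniformly in $\abs{f_\lambda}$ by the hard-thresholding oracle risk $\approx\tau_{\lambda,n}^2\sigma_\lambda^2+\sigma_\lambda^2=2\beta_n+\sigma_\lambda^2$) and the contribution from $f_\lambda=0$ (Gaussian tail, of order $\sigma_\lambda^2\sqrt{\log(\alpha_{\lambda,n}^{-1})}\alpha_{\lambda,n}$), then use the identity $\sigma_\lambda^2\log(\alpha_{\lambda,n}^{-1})=\beta_n$ and $\sum_\lambda\alpha_{\lambda,n}=n\gamma_n$ to assemble $2n\gamma_n\beta_n(1+o(1))$, and finally the crude bound $\beta_n\le\norm{\Sigma}\log(\gamma_n^{-1})$. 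This is the paper's decomposition, with the only presentational difference that the paper writes out the one-coordinate oracle bound (optimising $x^2 e^{-\tau^2(x-1)^2/2}$ over $x\ge1$) where you invoke it as the ``familiar'' thresholding inequality.

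There is, however, one claimed implication that is not correct. You write that the required uniformity $\min_\lambda\beta_n/\sigma_\lambda^2\to\infty$ (equivalently $\max_\lambda\alpha_{\lambda,n}\to 0$) \emph{follows} from $n\gamma_n=o(n)$, ``since otherwise $\sum_\lambda e^{-\beta_n/\sigma_\lambda^2}$ could not be as small as $n\gamma_n$.'' That deduction fails: a single large $\sigma_\lambda^2$ can make one $\alpha_{\lambda,n}$ stay bounded away from $0$ while the remaining terms keep the total sum small. Concretely, take $\sigma_1^2=1$ and $\sigma_\lambda^2=1/\log n$ for $\lambda\ge 2$, with $\gamma_n=n^{-1/2}$. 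Solving $e^{-\beta_n}+(n-1)e^{-\beta_n\log n}=n\gamma_n$ forces $\beta_n\to \tfrac12$, so $\alpha_{1,n}\to e^{-1/2}$, even though $\gamma_n\to0$ and $n\gamma_n\to\infty$. The paper itself only ``notes'' $\max_\lambda\alpha_{\lambda,n}\to 0$ without justification; it is really an additional regularity condition on $(\sigma_\lambda^2)_\lambda$, and it does hold under the polynomial-growth regime in which the matching with Theorem \ref{th:lower} is asserted. So flag it as an assumption rather than derive it from the sparsity regime. Apart from this point, your plan is correct and reproduces the paper's argument.
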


\begin{remark}
For faster growth than polynomial, we might well have $\beta_n=\log(\gamma_n^{-1})o(\norm{\Sigma})$. So, in general the upper bound matches exactly
the lower bound with respect to the term $2n\gamma_n\log(\gamma_n^{-1})$, while the influence of the heterogeneous noise depends on the specific case.
However, this procedure is non-adaptive since the threshold relies on the knowledge of the sparsity $\gamma_n$.
\end{remark}

\begin{proof}

Introduce the threshold value $\tau_{\lambda,n}=\sqrt{2\log(\alpha_{\lambda,n}^{-1})}$ and note $\max_\lambda\alpha_{\lambda,n}\to 0$.
We can split the error as follows:
\[ \E[(\hat
f_\lambda-f_\lambda)^2]=f_\lambda^2\PP((\xi_\lambda+f_\lambda/\sigma_\lambda)^2\le\tau_{\lambda,n}^2)+\E[\sigma_\lambda^2\xi_\lambda^2{\bf
1}_{\{(\xi_\lambda+f_\lambda/\sigma_\lambda)^2>\tau_{\lambda,n}^2\}}]=:I+II.
\]
For $f_\lambda>\tau_{\lambda,n}\sigma_\lambda$ term I is estimated by
\[ I\le f_\lambda^2\PP(\xi_\lambda\le \tau_{\lambda,n}-f_\lambda/\sigma_\lambda)\le
f_\lambda^2\exp(-(\tau_{\lambda,n}-f_\lambda/\sigma_\lambda)^2/2).
\]
Together with a symmetric argument for $f_\lambda<-\tau_{\lambda,n}\sigma_\lambda$ and a direct bound for $f_\lambda^2\le\tau_{\lambda,n}^2\sigma_\lambda^2$,
we thus obtain a  bound for general $f_\lambda$:
\[ I\le \big(f_\lambda^2\exp(-(\tau_{\lambda,n}-\abs{f_\lambda}/\sigma_\lambda)^2/2)\big)\vee \tau_{\lambda,n}^2\sigma_\lambda^2.
\]
Since for $\tau_{\lambda,n}\to\infty$ we have $\sup_{x\ge 1} x^2e^{-\tau_{\lambda,n}^2(x-1)^2/2}\to 1$, we consider $x=\abs{f_\lambda}/(\tau_{\lambda,n}\sigma_\lambda)$ and infer
\[I\le
\sigma_\lambda^2\tau_{\lambda,n}^2(1+o(1))
\text{ uniformly in $\lambda$}.
\]
Inserting the choice of the thresholds, we conclude
\[ I\le \sigma_\lambda^2\tau_{\lambda,n}^2(1+o(1)){\bf 1}_{\{f_\lambda\not=0\}}
=2\sigma_\lambda^2\log(\alpha_{\lambda,n}^{-1})(1+o(1)){\bf 1}_{\{f_\lambda\not=0\}}.
\]

For term II and $f_\lambda\not=0$ the immediate estimate $II\le
\sigma_\lambda^2$ suffices, while for $f_\lambda=0$ we integrate out explicitly and obtain:
\[ II=\sigma_\lambda^2\E[\xi_\lambda^2{\bf
1}_{\{\xi_\lambda^2>\tau_{\lambda,n}^2\}}]=\sigma_\lambda^22(\tau_{\lambda,n}+1)e^{-\tau_{\lambda,n}^2/2}
=2\sigma_\lambda^2\sqrt{2\log(\alpha_{\lambda,n}^{-1})}\alpha_{\lambda,n}(1+\tau_{\lambda,n}^{-1}).
\]
The overall risk of our estimator is therefore bounded by
\begin{align*}
&\sum_{\lambda\in\Lambda} \E_f[(\hat f_\lambda-f_\lambda)^2]\le
\sum_{\lambda:f_\lambda\not=0}\Big(2\sigma_\lambda^2\log(\alpha_{\lambda,n}^{-1})(1+o(1))
+\sigma_\lambda^2\Big)+\sum_{\lambda:f_\lambda=0}
2\sigma_\lambda^2\sqrt{2\log(\alpha_{\lambda,n}^{-1})}\alpha_{\lambda,n}(1+o(1))\\
&\le (2+o(1))
\Big(\sum_{\lambda:f_\lambda\not=0}\log(\alpha_{\lambda,n}^{-1})\sigma_\lambda^2
+\sqrt 2\max_\lambda\big((\log(\alpha_{\lambda,n}^{-1}))^{-1/2}\alpha_{\lambda,n}\big)\sum_{\lambda:f_\lambda=0}
\sigma_\lambda^2\log(\alpha_{\lambda,n}^{-1})\Big).
\end{align*}
Choosing $\alpha_{\lambda,n}=e^{-\beta_n/\sigma_\lambda^2}$, with $\beta_n>0$ satisfying $\sum_{\lambda \in\Lambda} \alpha_{\lambda,n}=n\gamma_n$,
minimises the last bound (asymptotically) and yields
\[ \sum_{\lambda\in\Lambda} \E_f[(\hat f_\lambda-f_\lambda)^2]\le (2+o(1))n\gamma_n\beta_n
\]
because by $\max_\lambda(\log(\alpha_{\lambda,n}^{-1}))^{-1/2}\alpha_{\lambda,n}\to 0$ the second term is of smaller order.
The last result is a direct consequence.
Indeed, we always have $\beta_n\le\log(\gamma_n^{-1})\norm{\Sigma}$ by bounding $\sigma_\lambda^2\le\norm{\Sigma}$,
which is minimax optimal for at most polynomial growth in $(\sigma_\lambda^2)$ by the lower bound in Theorem \ref{th:lower}.
\end{proof}

\section{A numerical example}
\label{sec:num}

\begin{figure}[t]
\centering
\includegraphics[width=14cm,height=7cm]{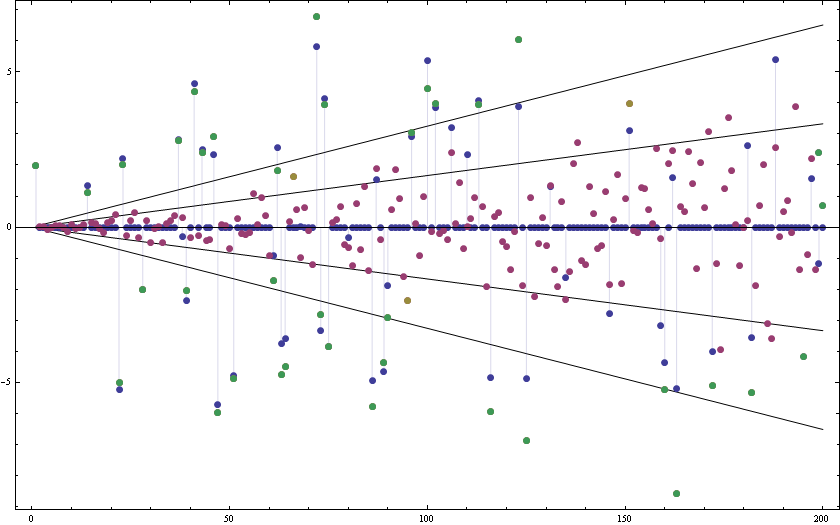}
\caption{Coefficients $(f_\lambda)$ (blue), observations $(X_\lambda)$ (green in full subset, green/yellow in adaptive threshold, magenta not taken) and universal/sparse thresholds (black)
(parameter values: $n=200$, $\gamma_n=0.25$, $\sigma_\lambda=0.01\lambda$ for $\lambda=1\ldots n$).
}
\label{Fig1}
\end{figure}

In Figure \ref{Fig1} a typical realisation of the coefficients $f_\lambda$ is shown in blue with 50 non-zero coefficients chosen uniformly on $[-6,6]$ and increasing noise level $\sigma_\lambda=0.01\lambda$ for $\lambda=1,\ldots,200$. The inner black diagonal lines indicate the sparse threshold (with oracle value of $\gamma_n$) and the outer diagonal lines the universal threshold. The non-blue points depict noisy observations $X_\lambda$. Observations included in the adaptive full subset selection estimator are coloured green, while those included for the adaptive threshold estimator are the union of green and yellow points (in fact, for this sample the adaptive thresholding selects all full subset selected points), the discarded observations are in magenta.

\begin{figure}[t]
\centering
\includegraphics[width=14cm,height=7cm]{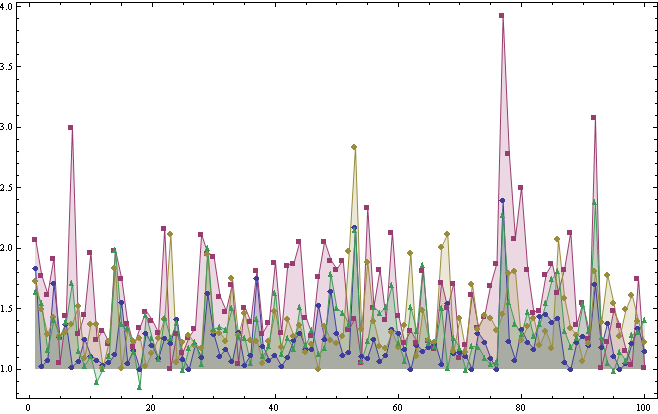}
\caption{First 100 Monte Carlo relative errors: adaptive (blue), universal (magenta) and sparse (yellow) thresholding, full subset selection (green).
}
\label{Fig2}
\end{figure}

We have run 1000 Monte Carlo experiments for the parameters $n=200$, $\sigma_\lambda=0.01\lambda$ in the sparse ($\gamma_n=0.05$) and dense ($\gamma_n=0.25$) case.
In Figure \ref{Fig2} the first 100 relative errors are plotted for the different estimation procedures in the dense case. The errors are taken as a quotient with the sample-wise oracle threshold value applied to the renormalised $X_\lambda/\sigma_\lambda$. Therefore only the full subset selection can sometimes have relative errors less than one. Table \ref{Tab1} lists the relative Monte Carlo errors for the two cases. The last column reports the relative error of the oracle procedure with $h_\lambda={\bf 1}(f_\lambda\not=0)$ that discards all observations $X_\lambda$ with $f_\lambda=0$ (not noticing the model selection complexity).

\begin{table}[t]
{\centering

\begin{tabular}{|l|r|r|r|r|r|r|}\hline
$\gamma_n$ & Adaptive Thr. & Universal Thr. & Sparse Thr. & Full Subset & No Model Selection\\\hline\hline
0.05  & 1.81 & 1.80 & 2.26 & 1.86 & 0.55\\\hline
0.25 & 1.22 & 1.62 & 1.39 & 1.33 & 0.53\\\hline
\end{tabular}\\[3mm]
}
\caption{Relative errors from 1000 Monte Carlo simulations}
\label{Tab1}
\end{table}

The simulation results are quite stable for variations of the setup. Altogether the thresholding works globally well. The (approximate) full subset selection procedure (see below for the greedy algorithm used) is slightly worse and exhibits a higher variability, but is still pretty good. By construction, in the dense case the oracle sparse threshold works better than the universal threshold, while the universal threshold works better in very sparse situations. The reason why the sparse threshold even with a theoretical oracle choice of $\gamma_n$ does not work so well is that the entire theoretical analysis is based upon potentially most difficult signal-to-noise ratios, that is coefficients $f_\lambda$ of the size of the threshold or the noise level. Here, however, the effective sparsity is larger (i.e., effective $\gamma_n$ is smaller) because the uniformly generated non-zero coefficients can be relatively small especially at indices with high noise level, see also Figure \ref{Fig1}.

Let us briefly describe how the adaptive full subset selection procedure has been implemented. The formula \eqref{EqPen} attributes to each selected coefficient $X_\lambda$ the individual penalty $p_\lambda^h=2\sigma_\lambda^2(\log(ne/R_\lambda^h)+\log_+(n\norm{\Sigma})/R_\lambda^h$ with the inverse rank $R_\lambda^h$ of $(h_\lambda\sigma_\lambda^2)_\lambda$. Due to $p_\lambda^h\le 2\sigma_\lambda^2(\log(ne)+\log_+(n\norm{\Sigma}))$ all coefficients with \[X_\lambda/\sigma_\lambda^2\ge 4(\log(ne)+\log_+(n\norm{\Sigma}))\]
are included into $h_1^\ast$ in an initial step. Then, iteratively $h_i^\ast$ is extended to $h_{i+1}^\ast$ by including all coefficients with
\[X_\lambda/\sigma_\lambda^2\ge 4(\log(ne/R_\lambda^{h_i^\ast})+\log_+(n\norm{\Sigma})/R_\lambda^{h_i^\ast}).
\]
The iteration stops when no further coefficients can be included. The estimator $h_I^\ast$ at this stage definitely contains all coefficients also taken by $h^\ast$. In a second iteration we now add in a more greedy way coefficients that will decrease the total penalized empirical risk. Including a new coefficient $X_{\lambda_0}$, adds to the penalized empirical risk the (positive or negative) value
\begin{align*}
& -X_{\lambda_0}^2+4\sigma_{\lambda_0}^2(\log(ne/R_{\lambda_0}^{h_I^\ast})+
\log_+(n\norm{\Sigma}))/R_{\lambda_0}^{h_I^\ast}\\
&\qquad\qquad -4\sum_{\lambda: \sigma_\lambda<\sigma_{\lambda_0}}(h_I^\ast)_\lambda\sigma_\lambda^2
(\log(1+1/R_{\lambda}^{h_I^\ast})
+\log_+(n\norm{\Sigma})/(R_{\lambda}^{h_I^\ast}(R_{\lambda}^{h_I^\ast}+1))).
\end{align*}
Here, $R_{\lambda_0}^{h_I^\ast}$ is to be understood as the rank at $\lambda_0$ when setting $(h_I^\ast)_{\lambda_0}=1$. Consequently, the second iteration extends $h_I^\ast$ each time  by one coefficient $X_{\lambda_0}$ for which the displayed formula gives a negative value until no further reduction of the total penalized empirical risk is obtainable. This second greedy optimisation does not necessarily yield the optimal full subset selection solution, but most often in practice it yields a coefficient selection $h^\ast$ with a significantly smaller penalized empirical risk than the adaptive threshold procedure. The numerical complexity of the algorithm is of order $O(n^2)$  due to the second iteration in contrast to the exponential order $O(2^n)$ when scanning all possible subsets. A more refined analysis of our procedure would be interesting, but might have minor statistical impact in view of the good results for the straight-forward adaptive thresholding scheme.

\bigskip \noindent
{\bf Acknowledgements}

The authors would like to thank Iain Johnstone, Debashis Paul and Thorsten Dickhaus for interesting discussions.
M. Rei{\ss} gratefully acknowledges financial support from the DFG via Research Unit FOR1735 {\it Structural Inference in Statistics}.


\end{document}